\newtheorem{theorem}{Theorem}[section]
\newtheorem{crl}[theorem]{Corollary}
\newtheorem{lemma}[theorem]{Lemma}
\newtheorem{proposition}[theorem]{Proposition}
\theoremstyle{definition}
\newtheorem{definition}[theorem]{Definition}
\newtheorem{example}[theorem]{Example}
\theoremstyle{remark}
\newtheorem{remark}[theorem]{Remark}
\numberwithin{equation}{section}
\newcommand{\df}{\emph}
\newcommand{\F}{\mathcal{F}}
\newcommand{\SM}{(\mathbf{S}, \mathbf{M})}
\newcommand{\BSM}{\mathbf{B\SM}}
\newcommand{\x}{\mathbf{x}}
\newcommand{\X}{\mathscr{X}}
\newcommand{\y}{\mathbf{y}}
\newcommand{\Z}{\mathbb{Z}}
\newcommand{\ZP}{\Z\mathbb{P}}
\begin{document}

%%%%%%%%%%%%%%%%%%%%%%%%%%%%%%%%%%%%%%%%%%%%%%%%%%%%%%%%%%%%%%%%%%%%%%%%%%

\title{Number of Triangulations of a M\"obius Strip}

%    Information for first author
\author{Véronique Bazier-Matte}
\address{Department of Mathematics, University of Connecticut, Storrs, Connecticut 06269-1009, United States of America}
\email{veronique.bazier-matte@uconn.edu}

%    Information for second author
\author{Ruiyan Huang}
\address{Lester B. Pearson College, BC, Canada}
\email{rhuang46@pearsoncollege.ca}

%    Information for third author
\author{Hanyi Luo}
\address{The Affiliated High School of South China Normal University}
\email{luohy.laurie2017@gdhfi.com}

%\thanks{Support information for the second author.}

%    General info
%\subjclass[2000]{Primary 54C40, 14E20; Secondary 46E25, 20C20}

%\date{\today}

%\dedicatory{This paper is dedicated to our advisors.}

%\keywords{cluster algebras}

\begin{abstract}
Consider a M\"obius strip with $n$ chosen points on its edge. A triangulation is a maximal collection of arcs among these points and cuts the strip into triangles. In this paper, we proved the number of all triangulations that one can obtain from a M\"obius strip with $n$ chosen points on its edge is given by $4^{n-1}+\binom{2n-2}{n-1}$, then we made the connection with the number of clusters in the quasi-cluster algebra arising from the M\"obius strip.
\end{abstract}

\maketitle
\tableofcontents

\section{Introduction}

%\hugh{This is much improved from the previous version I saw. Good work!}

\renewcommand{\S}{\mathbf{S}}

We consider the M\"obius strip as a \textit{marked surface}, that is, a two-dimensional Riemann surface $\mathbf{S}$ with a set $\mathbf{M}$ of marked points on the boundary of $\S$. We cut this surface into triangles with \emph{arcs}, which are curves on the marked surface. A maximal collections of arcs without intersections is called a \textit{triangulation}. Such an example of triangulation is given at Figure \ref{fig:mobiustrig}.

\begin{figure}[h]\centering\includegraphics[width=0.3\textwidth]{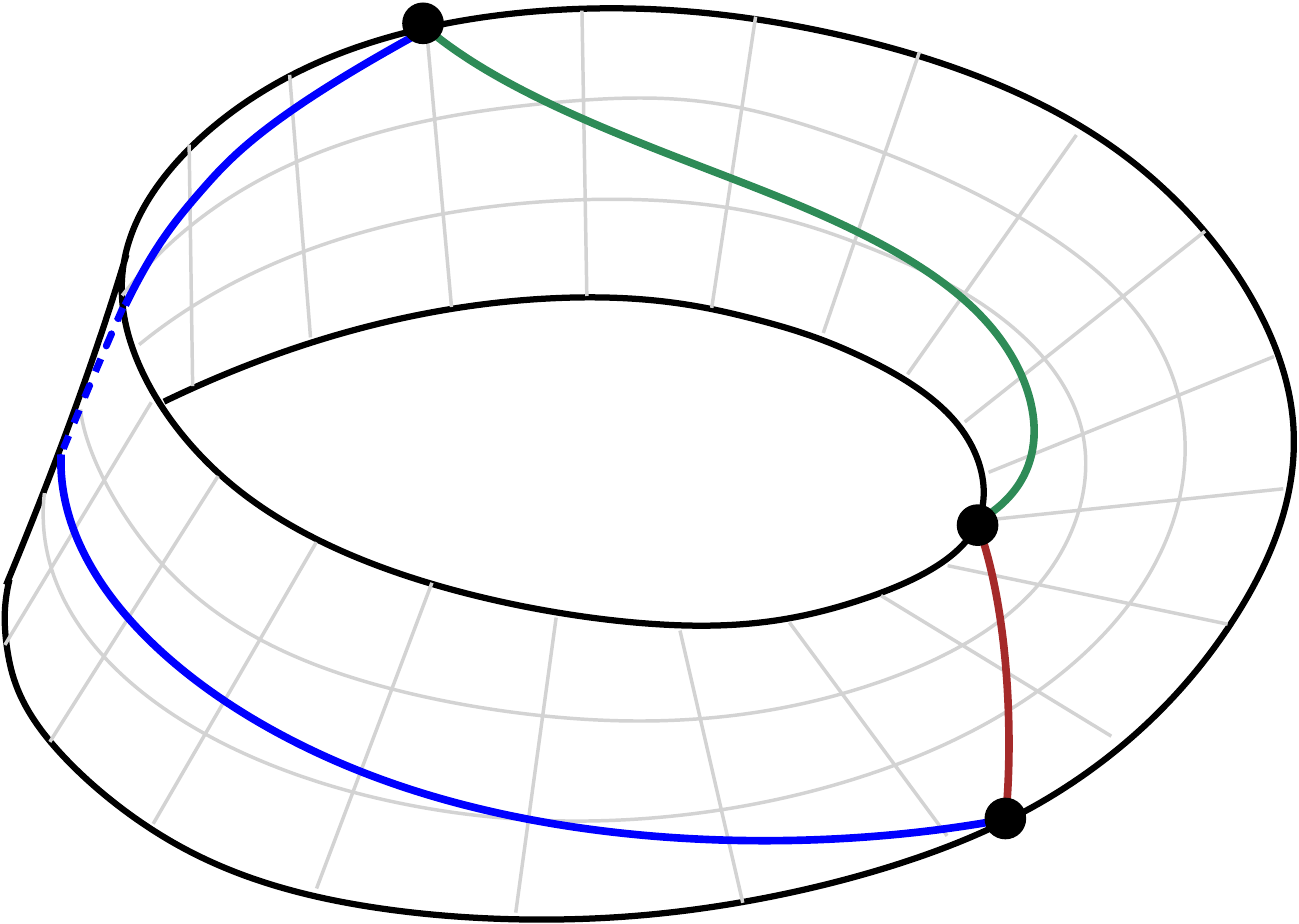}\caption{A triangulation of M\"obius strip with three marked points}\label{fig:mobiustrig}\end{figure}

We initiate a study about the number of triangulations of the M\"obius strip, the only non-oriented surface with a finite number of triangulations.

In the theory of quasi-cluster algebras developed by Dupont and Palesi \cite{DP15}, the triangulations of a M\"obius strip 
%also shows the number the combinatorics of quasi-arc complex, which 
correspond naturally to the clusters in the quasi-cluster algebra from a M\"obius strip, which is explained in section \ref{Sect:QuasiCluster}.

Our main result is the following theorem:
\begin{theorem}
    The number of triangulations of a M\"obius strip with $n$ marked points is given by
        $$4^{n-1}+\binom{2n-2}{n-1}.$$
\end{theorem}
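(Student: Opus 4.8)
The plan is to count triangulations of the Möbius strip with $n$ marked points by a careful case analysis based on the behavior of arcs relative to the non-orientability of the surface. The essential feature distinguishing the Möbius strip from a disk is that arcs can wind around the central ``core'' of the strip; in particular one expects \emph{quasi-arcs} that cross the core and possibly ``one-sided'' closed curves. The formula $4^{n-1}+\binom{2n-2}{n-1}$ strongly suggests splitting the count into two regimes: a term $\binom{2n-2}{n-1}$ that looks like a Catalan-type or central-binomial count (reminiscent of triangulations of a polygon, i.e.\ the orientable/disk contribution), and a term $4^{n-1}$ that should account for the genuinely Möbius configurations, where $4=2\cdot 2$ hints at a product of binary choices (e.g.\ a position for a distinguished core-crossing arc times an orientation/side choice at each step).

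First I would fix a combinatorial model for the marked Möbius strip, representing it as a once-punctured or ``folded'' polygon: cut the strip along a transversal arc to unfold it into a $2n$-gon (or an annulus-like strip) with a sign-reversing identification of two boundary edges. Under this unfolding, a triangulation of the Möbius strip pulls back to a triangulation of the polygon that is invariant under the orientation-reversing identification, so the problem becomes counting symmetric triangulations of a $2n$-gon subject to the twist. Next I would isolate the arcs that are ``essential'' in the Möbius sense—those that cannot be contracted into a disk—and show every triangulation contains a controlled number of them. The key structural step is to argue that each triangulation is determined by (i) the cyclic data of which marked points are joined across the core, and (ii) the triangulation of the disk-like region that remains once the core-crossing arcs are removed. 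Establishing that this decomposition is a bijection is where I expect the real work to lie.

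The main obstacle, as I see it, is handling the non-orientability rigorously: arcs on a Möbius strip need not be simple in the naive sense and one must be careful about which curves count as distinct quasi-arcs and how they can be compatibly combined, since the usual disk arguments (where triangulations of an $m$-gon are counted by Catalan numbers) break down when an arc may ``return'' on the opposite side. I would therefore spend most effort proving a clean normal-form lemma: every triangulation has a unique innermost core-crossing arc, cutting along which reduces the Möbius strip to a polygon with $2n-1$ or so boundary points, and then set up a recursion or generating-function identity. Summing the polygon contribution over the possible core data should produce the central-binomial term $\binom{2n-2}{n-1}$, while the arrangements of core-crossing quasi-arcs, each carrying an independent binary choice, should assemble into the geometric term $4^{n-1}$.

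To make this precise I would verify the formula for small $n$ directly—computing the triangulation counts for $n=1,2,3$ by hand (the $n=3$ case being exactly the configuration drawn in Figure~\ref{fig:mobiustrig})—to pin down the correct normalization of the two terms and to rule out off-by-one errors in the core-crossing count. Once the base cases match $4^{n-1}+\binom{2n-2}{n-1}$, the induction step reduces to the normal-form lemma together with the standard Catalan/central-binomial recursion, and the theorem follows by assembling the two contributions.
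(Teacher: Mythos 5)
Your proposal is a plan rather than a proof, and the two steps you yourself flag as ``where the real work lies'' are exactly the ones that are missing. The paper's actual argument is more elementary than what you sketch: it looks at the unique triangle $\Delta$ containing the boundary edge between marked points $n$ and $1$, shows $\Delta$ can take only three forms (two mirror-image types in which the cross-cap is pushed into a smaller M\"obius strip, and one type in which both non-boundary edges of $\Delta$ pass through the cross-cap, leaving an $(n+1)$-gon), and thereby derives the recursion $T_n = 2\sum_{i=0}^{n-2}C_iT_{n-i-1}+nC_{n-1}$ with $T_1=2$; a separate induction using the Catalan recurrence then shows the first summand equals $4^{n-1}$ and the second equals $\binom{2n-2}{n-1}$. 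Your guess about which term corresponds to which geometric regime is qualitatively right ($nC_{n-1}$ is indeed ``$n$ choices of core data times a polygon count''), but your proposed mechanism for the $4^{n-1}$ term --- core-crossing arcs ``each carrying an independent binary choice'' --- is not substantiated and does not obviously correspond to anything real: in the paper $4^{n-1}$ is not a product of local binary choices but the closed form of the recursive sum $2\sum C_iT_{n-i-1}$, obtained only after a nontrivial telescoping induction. Asserting a product structure without exhibiting the bijection is a gap, not a simplification.

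A second concrete problem is your combinatorial model. Unfolding the strip into a $2n$-gon with an orientation-reversing edge identification and counting ``symmetric'' triangulations does not account for the features that make the M\"obius strip special: one-sided closed curves (which are legitimate arcs here), quasi-triangles (a monogon around the cross-cap together with a one-sided curve), and anti-self-folded triangles. These do not lift to ordinary symmetric polygon triangulations in any straightforward way, so your claimed correspondence ``triangulations of the strip $\leftrightarrow$ twist-invariant triangulations of the $2n$-gon'' would need its own proof and is likely false as stated. Your ``normal-form lemma'' (a unique innermost core-crossing arc) is also unproven and is not how the paper proceeds --- the paper never needs uniqueness of a core-crossing arc, only the classification of the single triangle adjacent to a fixed boundary segment. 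Finally, note the correct base case is $T_1=2$ (the two triangulations of $M_1$ include one containing a one-sided closed curve), which your model would have to reproduce; checking small cases, as you propose, would catch this, but it does not substitute for the missing structural lemmas.
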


To prove this result, we count the number of triangulations in an iterative way. Then we simplify the equation into a more succinct form.

\section{Preliminaries}

In this section, we start by reviewing some notions about non-orientable surfaces, and aim to define the triangulations of these surfaces. The review combines results from \cite{FST08} and \cite{DP15}.

Let $\mathbf{S}$ be a compact 2-dimensional manifold with boundary and let $\mathbf{M}$ be a finite set of marked points of $\mathbf{S}$ such that each boundary component contains at least one marked point. We denote the surface $(\mathbf{S}, \mathbf{M})$. In the scope of this paper, we only consider the case when all the points of $\mathbf{M}$ are on the boundary. Moreover, to exclude pathological cases, we do not allow $\mathbf{S}$ to be a monogon or digon.

Intuitively, a surface is \df{orientable} if moving continuously any figure on this surface cannot result in the mirror image of the figure when it is back to its starting point. Else, it is \emph{non-orientable}.

\begin{definition}
Let $\mathbf{S}$ be a surface. A closed curve on $\mathbf{S}$ is defined as \df{two-sided} if it has a regular, orientable neighbourhood. If any neighbourhood is non-orientable, the curve is \df{one-sided}.
\end{definition}

A surface is non-orientable if and only if there is a one-sided curve on it. A M\"obius strip is such an example. This will be an important notion in the proof of Proposition \ref{Prp::Tn}. 

\begin{definition}
%A \df{cross-cap} is a closed disk where the antipodal points on the boundary are identified. It is represented as a circle with a cross inside as shown in Figure \ref{fig:crosscap}. 
A \emph{cross-cap} is obtained from an annulus by identifying antipodal points on the inner boundary.
This is represented (as in Figure \ref{fig:crosscap}) by drawing a cross inside the inner circle.
\end{definition}
%\hugh{This definition doesn't match your picture, since in the definition, the cross-cap has no boundary. (There seems to be some confusion about what people use the word cross-cap for: sometimes it has a boundary and sometimes it doesn't. But I think you want the one with the boundary.) I think what you want to say is something like the following: "A cross-cap is obtained from an annulus by identifying antipodal points on the inner boundary. This is represented (as in Figure 1) by drawing a cross inside the inner circle."}

\begin{figure}[h]\centering\includegraphics[width=0.3\textwidth]{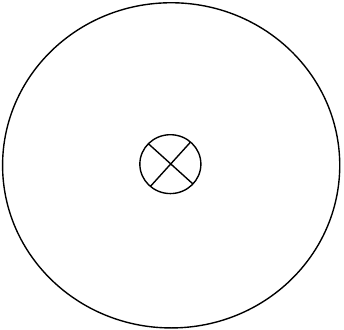}\caption{Representation of a M\"obius strip with cross-cap}\label{fig:crosscap}\end{figure}

%\hugh{I don't remember what figure 1 looked like before, but now I am not finding it hard to understand. It would probably be good to explain that the idea is that you are supposed to identify edges with the same marking, respecting the orientation. It would probably be a good idea in the second and third images to use different labels for the single arrows (you could use white and black arrowheads, for example), so that it is unambiguous what is being identified with what. Also, it's more idiomatic to say "turn inside out" than "turn outside in", though either is understandable.}

\begin{definition}
The \df{isotopy class} of a curve $\gamma$ is the set of embeddings of $\gamma$ that can be deformed into each other through a continuous path of homomorphisms such that:
\begin{itemize}
\item the endpoints of each embedding are always the same;
\item except for its endpoints, each embedding is disjoint from $\mathbf{M}$ and the boundary.
\end{itemize}
We define two types of curves without self-intersections in $\SM$. The first type of curves without self-intersections consists of closed one-sided curves in the interior of $S$. The second type consists of curves with both endpoints in $M$. We refer to the union of isotopy classes of these two types as \emph{arcs}.
%We define two classes of curves without self-intersections in $\SM$. The first class of curves without self-intersections consists of closed one-sided curves in the interior of $S$. The second class consists of curves with both endpoints in $M$. We refer to the union of these two classes as \emph{arcs}.
%An \df{arc} in $\SM$ is the isotopy class of a curve without self-intersection in $\SM$, such that either the curve is one-sided and in the interior of $S$ (see Figure \ref{fig:arc} (a) for example), or both of the endpoints are in $\mathbf{M}$ (see Figure \ref{fig:arc} (b) for example).
%\vero{Hugh, we were note sure about the wording of this sentence. Is it ok? Is it understandable?}
%\hugh{I think it is okay. It might be nice to split up the sentence. "We define two classes of curves without self-intersections in $\SM$. The first class of curves without self-intersections consists of closed one-sided curves in the interior of $S$. The second class consists of curves with both endpoints in $M$. We refer to the union of these two classes as "arcs". }

Two arcs are \df{compatible} if there exist representatives in their isotopy class not intersecting with each other.
\end{definition}

\begin{figure}\centering\includegraphics[width=0.7\textwidth]{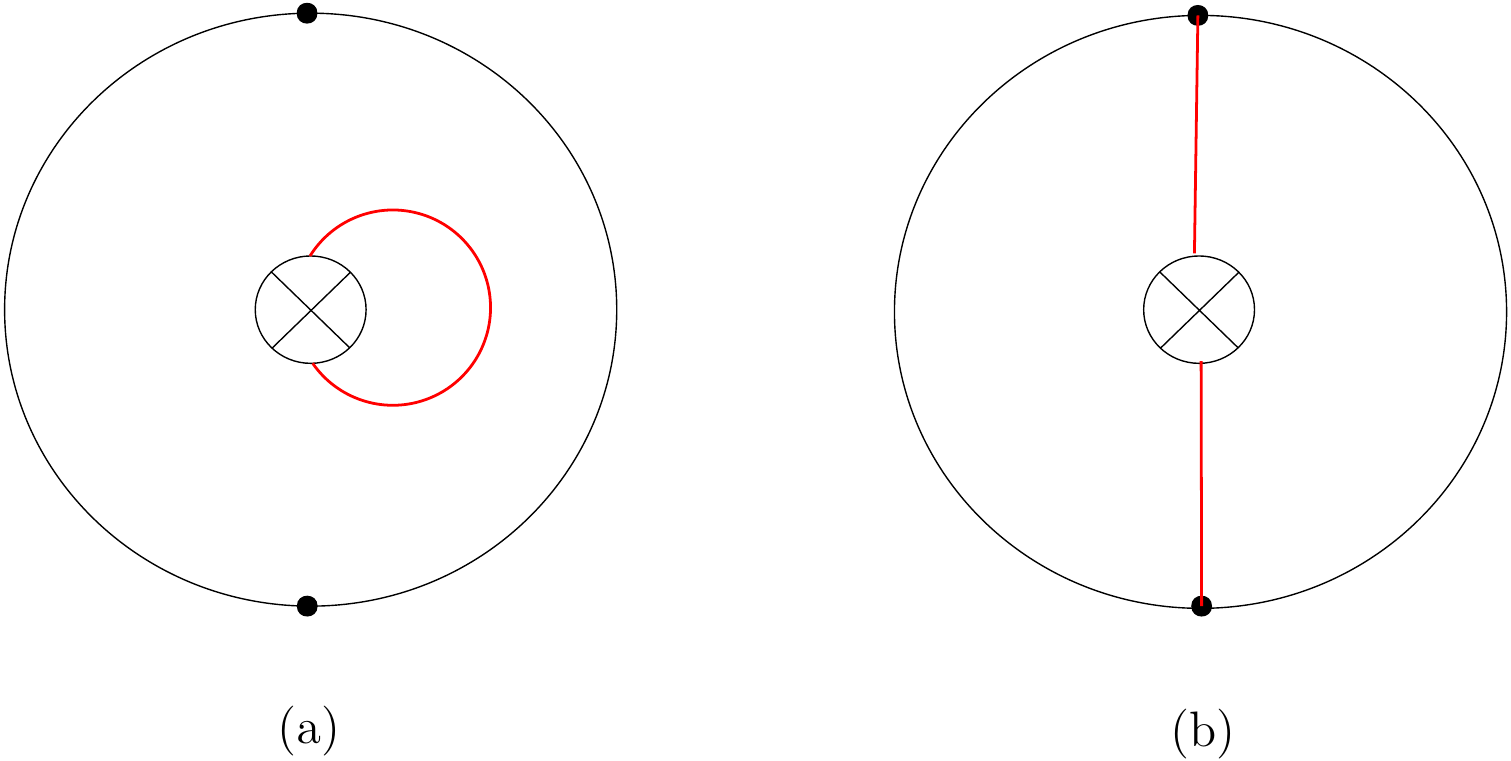}
\caption{The two kinds of arcs}
\label{fig:arc}\end{figure}

\begin{example}
In figure \ref{fig:compatible}, even if they intersect each other, the curves $\alpha$ and $\beta$ in (a) are representatives of compatible arcs, because $\alpha'$ and $\beta'$ in (b) are representatives of the same respective isotopy classes and they do not intersect.

\begin{figure}\centering\includegraphics[width=0.8\textwidth]{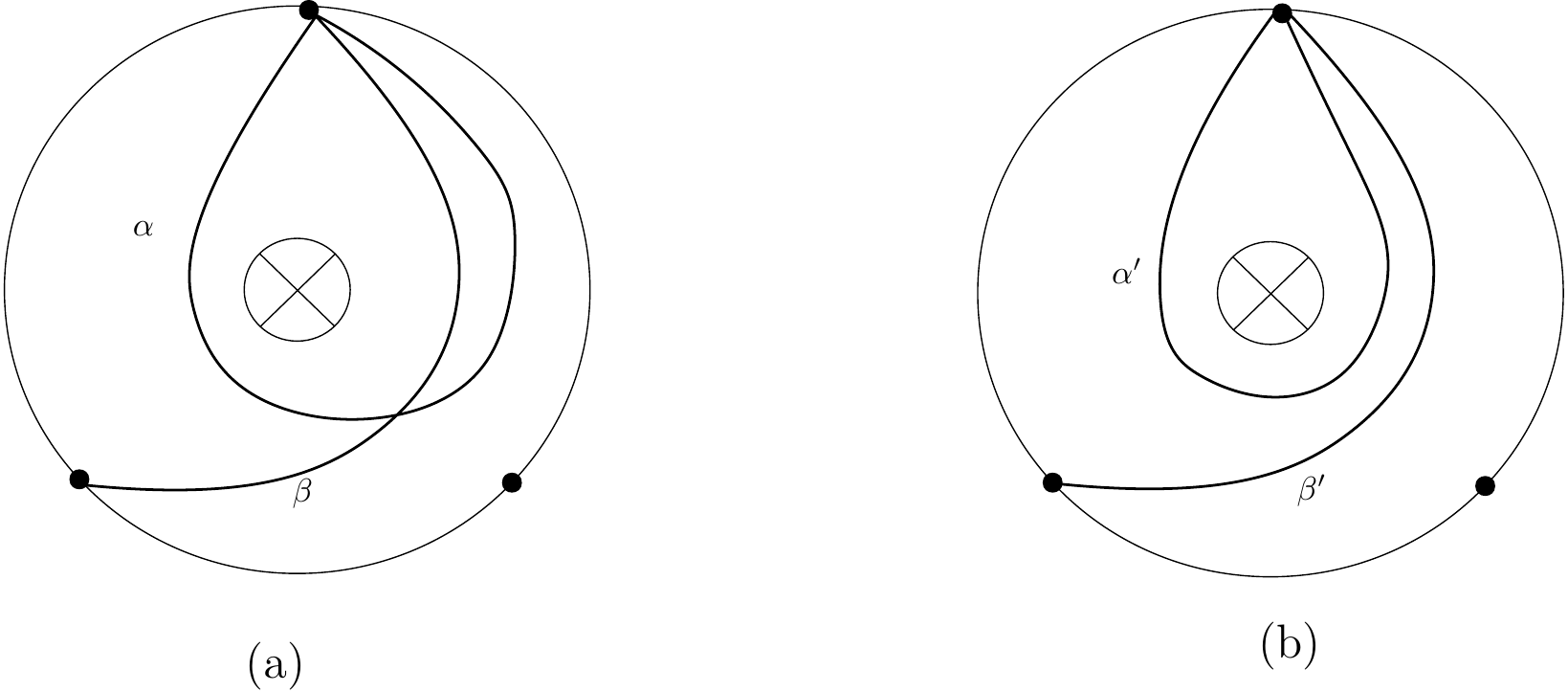}\caption{Compatible arcs}\label{fig:compatible}\end{figure}
\end{example}

%Remark that an arc, according to our definition, is a quasi-arc according to Dupont and Palesi's definitions, \cite{DP15}.
Note that what we refer to as arcs, Dupont and Palesi refer to as quasi-arcs, \cite[Definition 1]{DP15}.

%an arc (the isotopy class of a two-sided curve) or 
%\hugh{This isn't actually what DP say. Everything you call an arc, they call a quasi-arc. Then, for them, arcs are a special kind of quasi-arc. I would say "Note that what we refer to as arcs, Dupont and Palesi refer to as quasi-arcs.}
%\vero{I changed that also.}

%Remark that our definition of arcs is not exactly the same as the one in \cite{DP15}; an arc in this paper is an arc \emph{or a quasi-arc} in \cite{DP15}.

Let $\BSM$ denote the set of \emph{boundary segments}, that is, the set of connected components of the boundary of $\mathbf{S}$ with the points of $\mathbf{M}$ removed.

%\hugh{I would prefer if an arc was not also a quasi-arc, because the term quasi-x normally means "something like an x that is not an x". But if that is the convention in the literature, I guess we are stuck with it.}

With the above definitions, we are now able to introduce triangulations of a surface.

\begin{definition}
A \df{triangulation} of $(\mathbf{S}, \mathbf{M})$ is a maximal collection of compatible arcs. 
Each area of $\SM$ cut by this maximal collection of compatible arcs is a \emph{triangle}.
%A triangulation is called a triangulation if it consists only of elements in $\mathbf{A}(\mathbf{S}, \mathbf{M})$.
\end{definition}

\begin{example}
The Figure \ref{fig:cros-trig} is the same triangulation as in Figure \ref{fig:mobiustrig}.

\begin{figure}[h]\centering\includegraphics[width=0.3\textwidth]{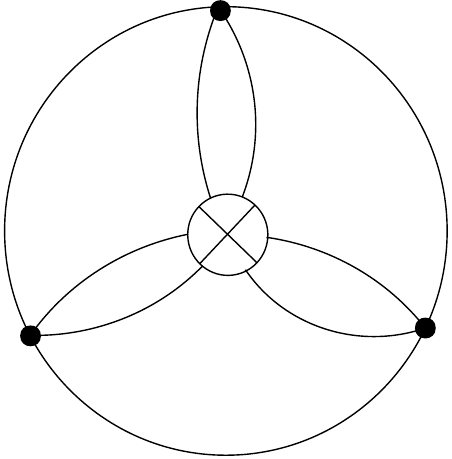}\caption{A triangulation of M\"obius strip with cross-cap representation}\label{fig:cros-trig}\end{figure}
\end{example}

Remark that the definition implies that if another arc can be added into the current collection of arcs, then this is not a triangulation.

Once again, our terminology does not totally coincide with that of Dupont and Palesi. What we call a triangulation, they would call a quasi-triangulation, \cite[Defintion 6]{DP15}.

Let $M_n$ denote the M\"obius strip with $n$ marked points on its boundary.

\begin{example}
Figure \ref{fig:quasi-trig} shows a triangulation of $M_2$.
%in a M\"obius strip with 2 marked points.% and a triangulation in a M\"obius strip with 3 marked points.
\end{example}

\begin{remark}
In some cases, the triangles formed as a result of triangulation can have two edges identified; these triangles are called \df{anti-self-folded triangles}.
The identified edges must %\hugh{"Mandatory" is an adjective so can't be used here. I would say "must"} 
go through the cross-cap and all the three vertices are also identified.
\end{remark}

\begin{example}
 There is an anti-self-folded triangle in the Figure \ref{fig:quasi-trig} formed by the arcs $a$ and $b$.
 The three edges are represented with different colors, with the yellow one and the green one identified.  
\end{example}

\begin{figure}[hb]\centering\includegraphics[width=0.3\textwidth]{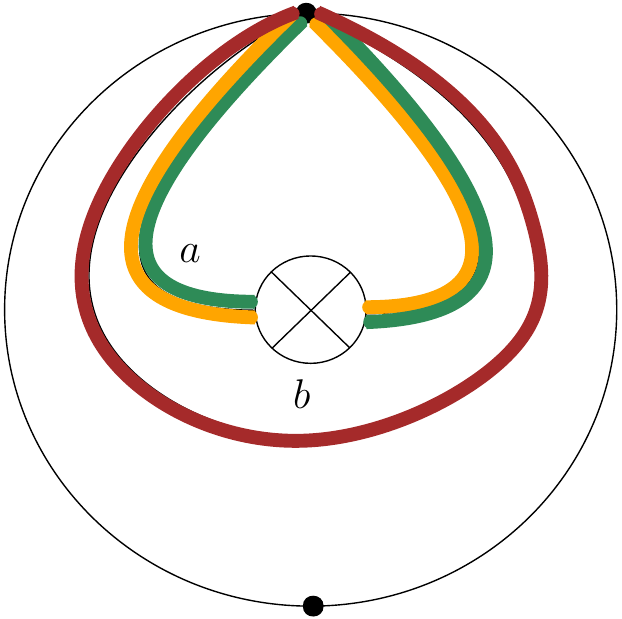}\caption{Triangulation of the Möbius strip with two marked points, with the three edges of an anti-self-folded triangle labeled}\label{fig:quasi-trig}\end{figure}

\begin{definition}
A \emph{quasi-triangle} consists in a monogon encircling only a cross-cap and a one-sided curve through the cross-cap in the interior of $\mathbf{S}$, as in Figure \ref{fig:quasitrig}.
\end{definition}
%\hugh{It would be helpful to clarify how this terminology matches with DP. These are what DP call "annulus with one marked point," right? Anti-self-folded triangles also seem to me to satisfy your definition; I think that unintentional. It would help if you said you require the one-sided curve to be in the interior of $S$.}

\begin{figure}[h]\centering\includegraphics{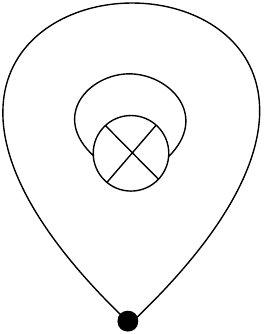}\caption{A quasi-triangle}\label{fig:quasitrig}\end{figure}

\begin{proposition} \cite[Proposition 7]{DP15}
 A triangulation cuts $M_n$ into a finite union of triangles and quasi-triangles.
\end{proposition}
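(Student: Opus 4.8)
The plan is to combine the finiteness of a triangulation with a region-by-region topological classification. First I would record that a triangulation of $M_n$ consists of finitely many pairwise compatible arcs, so, choosing representatives that are pairwise disjoint in their interiors, these arcs cut $M_n$ into finitely many connected regions; it then suffices to show that each such region is a triangle or a quasi-triangle. Each region $R$ is an open subsurface of $M_n$ whose frontier is a union of arcs and boundary segments, and the points of $\mathbf{M}$ lying on $\partial R$ are its vertices. Since $M_n$ carries a single cross-cap, and a subsurface is non-orientable exactly when it contains a one-sided closed curve, the cross-cap lies in precisely one region $R_0$; every other region is orientable. For such an orientable region a standard argument shows it is a disk (its frontier arcs are essential, which rules out an annular region), so the proof splits into the orientable regions and the single non-orientable region $R_0$.

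For an orientable region $R \neq R_0$, a disk, I would run the usual maximality argument. A disk bounded by a single side would force that side to be a contractible loop, and one bounded by two sides would force the two sides to be isotopic; both contradict the elements of the triangulation being distinct essential arcs. Hence $R$ has at least three sides. If it had four or more, two non-adjacent vertices could be joined by an arc drawn inside $R$; being contained in $R$, this arc meets no existing arc in its interior, so it is compatible with the whole triangulation yet not already present, contradicting maximality. Therefore every orientable region is a triangle, where I permit the degenerate identifications (as in a self-folded triangle) arising when sides or vertices of $R$ are glued together in $M_n$.

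It remains to treat the single non-orientable region $R_0$, and here I would split on whether the triangulation contains a one-sided closed curve. If it does, call it $c$; since $c$ is non-separating (its regular neighbourhood is a M\"obius band), cutting $M_n$ along $c$ yields an orientable surface in which $c$ opens into a single new boundary circle carrying no marked points. Because each boundary component must meet a point of $\mathbf{M}$, maximality forces this configuration to be a monogon $\ell$ based at a marked point and encircling only the cross-cap, so that $\ell$ together with $c$ is precisely a quasi-triangle. If instead the triangulation contains no one-sided closed curve, then the cross-cap is not separated off, and maximality forces arcs to pass through it; tracing the identifications shows that $R_0$ is an anti-self-folded triangle, namely a triangle two of whose edges are identified through the cross-cap. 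In either case $R_0$ is a triangle or a quasi-triangle, completing the classification.

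The decisive difficulty is this non-orientable region: the \emph{add a diagonal} argument fails once a one-sided curve is present, so one must argue directly from the topology of the cross-cap. The delicate points are verifying that the new boundary created by cutting along $c$ really forces the monogon $\ell$, ruling out regions that wind around the cross-cap in a more complicated non-disk fashion, and confirming that the case with no one-sided curve genuinely produces an anti-self-folded triangle rather than some unclassified region. By contrast, the orientable part is routine once finiteness and the disk structure of the regions are in hand.
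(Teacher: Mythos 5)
The paper offers no proof of this proposition; it is quoted directly from Dupont and Palesi \cite[Proposition 7]{DP15}, so there is no internal argument to compare yours against. Judged on its own, your outline follows the expected strategy (classify the complementary regions, then invoke maximality), but several of the pivotal claims are wrong or unsupported. The organizing device --- ``the cross-cap lies in precisely one region $R_0$, and that region is the non-orientable one'' --- does not hold. If any arc of the triangulation passes through the cross-cap there is no single complementary region containing it, and in fact \emph{every} complementary region is orientable: the M\"obius strip carries a unique isotopy class of one-sided simple closed curves, so either that curve is in the triangulation (and cutting along it already leaves an orientable annulus) or it is not, in which case a non-orientable region would contain such a curve in its interior, compatible with everything and addable, contradicting maximality. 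The genuinely exceptional region is therefore not a non-orientable one but the annulus left over next to a one-sided closed curve that belongs to the triangulation; your parenthetical dismissal of annular regions (``frontier arcs are essential, which rules out an annular region'') excludes precisely the case that produces the quasi-triangle.

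Two of your case conclusions are also not established. The claim that, when no one-sided closed curve is present, the cross-cap region ``is an anti-self-folded triangle'' is false in general: the paper's own Type 3 triangle $\Delta$ (Figure \ref{fig:Delta}, with $1 < i < n$) has two distinct edges through the cross-cap and three distinct vertices, and is an ordinary triangle; the correct conclusion is only that all regions are triangles, some of which \emph{may} have identified edges. And in the quasi-triangle case the monogon is not forced by the axiom that every boundary component of $\mathbf{S}$ meets $\mathbf{M}$ --- the circle created by cutting along $c$ is not a boundary component of $(\mathbf{S},\mathbf{M})$ --- but again by maximality: if the outer boundary of that annular region had two or more vertices, one could add a loop encircling $c$ based at one of them. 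These are exactly the points you yourself flag as ``delicate,'' but they are the entire content of the proposition, and as written the argument does not close them.
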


%\hugh{I had to draw my own picture to convince myself that the interior of an anti-self-folded triangle really is a triangle. Would it be worth drawing a picture of such a triangle by itself to explain this? I'm imagining shading in three different colours the part of the triangle "just inside" each of the three edges.}

\begin{proposition} \cite[Example 13]{DP15}
The number of arcs in a triangulation of $M_n$ is equal to $n$ the number of marked points.
\end{proposition}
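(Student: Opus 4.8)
The plan is to reduce the count to the classical fact that a triangulated convex $k$-gon has exactly $k-3$ diagonals. Write $a$ for the number of arcs of a fixed triangulation $T$ of $M_n$, and recall that the boundary of $M_n$ is a single circle carrying $n$ marked points, so that there are exactly $n$ boundary segments. Since the M\"obius strip deformation retracts onto its core circle, $\chi(M_n)=0$; this will be the bookkeeping device that forces the piece $P$ below to be a disk rather than an annulus. Using the decomposition of $T$ into triangles and quasi-triangles, and the fact that $M_n$ carries a single cross-cap, all the non-orientability of $T$ is concentrated at one place: $T$ contains either exactly one quasi-triangle, or no quasi-triangle but exactly one anti-self-folded triangle among its triangles.

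First I would isolate the cross-cap. In either case there is a loop $\gamma$ based at a marked point $p$ that encircles the cross-cap: namely the bounding monogon of the quasi-triangle, or the base edge of the anti-self-folded triangle. This $\gamma$ bounds, on the side containing the cross-cap, a M\"obius band $N$ that contains exactly one further arc of $T$ --- the one-sided closed curve in the quasi-triangle case, or the doubled edge running through the cross-cap in the anti-self-folded case. Next I would cut $M_n$ along $\gamma$. Writing $P$ for the complementary piece, I claim $P$ is an $(n+1)$-gon: cutting along the loop $\gamma$ splits its basepoint $p$ into two boundary vertices $p'$ and $p''$, so the boundary of $P$, read off by travelling once around $\partial M_n$ and then back along $\gamma$, visits $p',p_2,\dots,p_n,p''$ and consists of the $n$ boundary segments together with the single edge $\gamma$, i.e.\ $n+1$ sides. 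Euler characteristic is the consistency check: gluing the disk $P$ to the M\"obius band $N$ along the circle $\gamma$ and then re-identifying $p'$ with $p''$ gives $1+0-0-1=0=\chi(M_n)$, confirming that $P$ is a disk.

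It then remains to observe that the arcs of $T$ other than $\gamma$ and the single interior arc of $N$ lie entirely in $P$ and form a triangulation of the $(n+1)$-gon: they are pairwise compatible, disjoint from the cross-cap region, and maximal there precisely because $T$ is maximal. Hence their number is $(n+1)-3=n-2$, and the total number of arcs is
$$a=(n-2)+\underbrace{1}_{\gamma}+\underbrace{1}_{\text{interior arc of }N}=n.$$
The degenerate case $n=1$, where the quasi-triangle cannot occur (it would leave a digon), is checked directly.

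The step I expect to be the main obstacle is the structural localization in the first paragraph: rigorously arguing that a triangulation of $M_n$ meets the cross-cap in exactly one of the two configurations, so that a single encircling loop $\gamma$ exists and the remainder is genuinely a disk. Everything downstream is then either the standard polygon count or an Euler-characteristic consistency check; the orientability bookkeeping (the vanishing of $\chi(M_n)$ and the splitting of $p$ under cutting) is the only place where the non-orientable nature of $M_n$ enters, and it is exactly what separates the answer $n$ from the orientable polygon answer $n-3$.
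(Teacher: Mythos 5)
The paper does not actually prove this statement---it is quoted from \cite[Example 13]{DP15}---so there is no internal proof to compare against; I can only assess your argument on its own terms. The counting in the two configurations you treat is correct (an $(n+1)$-gon contributes $n-2$ diagonals, plus $\gamma$, plus the single arc inside the M\"obius band $N$, giving $n$), and your identification of the complementary piece $P$ as an $(n+1)$-gon matches the computation the paper itself performs for its ``Area 2''.

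However, the structural claim on which everything rests---that every triangulation of $M_n$ contains either exactly one quasi-triangle or exactly one anti-self-folded triangle, so that a loop $\gamma$ encircling the cross-cap always exists among the arcs of $T$---is false, and you correctly flagged it as the weak point. The paper's own case analysis in Proposition \ref{Prp::Tn} exhibits the counterexample: in the ``Type 3'' configuration (Figure \ref{fig:Delta}, Case 2), the triangle on the boundary segment between the marked points $n$ and $1$ has \emph{both} of its other edges passing through the cross-cap as two distinct, ordinary, non-identified arcs. Every piece of such a triangulation is an ordinary triangle; there is no quasi-triangle, no anti-self-folded triangle, and no arc of $T$ isotopic to a loop encircling the cross-cap, so there is nothing to cut along. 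These are not exceptional: they account for the $nC_{n-1}$ summand in the paper's recursion. Your argument therefore omits a whole family of triangulations. The gap is repairable---in the missing case you can cut along one of the two arcs through the cross-cap (this turns $M_n$ into a disk and reduces again to a polygon count), or replace the whole case analysis by a uniform Euler-characteristic computation using $\chi(M_n)=0$, with a little care for the one-sided closed curve, which has no endpoints---but as written the proof does not cover all triangulations.
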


\begin{theorem}\cite[Theorem 45]{DP15} \label{Thm::NumberArcs}
Let $\SM$ be a surface with finite number of triangulations and marked points on the boundary of $\S$. If $\SM$ is orientable, it is an $n$-gon ($n \geq 3$); if $\SM$ is non-orientable, it is a M\"obius strip.
%\vero{I changed $n \ge 4$ by $n \ge 3$.}
%\hugh{What is $M$? I guess it is what I would denote $|\mathbf{M}|$, but the absolute value signs in my notation probably need a definition.}
\end{theorem}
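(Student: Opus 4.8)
The plan is to reduce the classification to a statement about the isotopy classes of arcs, and then to rule out every surface other than a polygon or a Möbius strip by exhibiting infinitely many arcs. First I would record the elementary equivalence that $\SM$ admits only finitely many triangulations if and only if it admits only finitely many isotopy classes of arcs. One direction is immediate: a triangulation cuts $\S$ into finitely many triangles and quasi-triangles, hence is a finite collection of arcs, so if the arcs are finite in number then a fortiori there are finitely many maximal compatible collections. For the converse I would use that any compatible collection extends to a maximal one; in particular every single arc lies in at least one triangulation, so if there were only finitely many triangulations their arcs would exhaust a finite set, forcing finitely many arcs overall. Thus it suffices to determine which $\SM$ carry only finitely many arcs.

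Next I would invoke the classification of compact surfaces with boundary: up to homeomorphism $\S$ is either the orientable $\Sigma_{g,b}$ (genus $g$, with $b$ boundary components) or the non-orientable $N_{k,b}$ ($k$ cross-caps, $b$ boundary components), with every marked point on $\partial\S$, at least one per component, and no monogon or digon. The engine of the argument is a single lemma: if $\S$ contains a two-sided essential simple closed curve $c$ and an arc $\alpha$ with geometric intersection number $\abs{\alpha \cap c}\ge 1$, then the images $\{T_c^m(\alpha) : m \in \Z\}$ under powers of the Dehn twist $T_c$ are pairwise non-isotopic. This follows from the standard intersection-growth inequality for Dehn twists, which gives $\abs{T_c^m(\alpha)\cap\alpha}\to\infty$ as $\abs{m}\to\infty$, so the classes are distinct and $\S$ has infinitely many arcs.

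I would then produce such a curve $c$ and a crossing arc in each case that must be excluded. If $b\ge 2$ there is an essential curve separating two boundary components, crossed by any arc joining marked points on different components; this handles the annulus and all multiply-bounded surfaces at once. If $\S$ is orientable with $g\ge 1$, a non-separating curve around a handle is two-sided, essential, and crossed by a suitable arc. If $\S$ is non-orientable with $k\ge 2$ cross-caps, one exhibits a two-sided essential curve (for instance the boundary of a neighbourhood of a pair of cross-caps), again met by an arc between marked points. Together the conditions $b\ge 2$, orientable $g\ge 1$, and non-orientable $k\ge 2$ leave exactly two survivors: the orientable disk $\Sigma_{0,1}$ with at least three marked points, i.e. the $n$-gon, and the non-orientable $N_{1,1}$, i.e. the Möbius strip; both are known to have finitely many triangulations, the case of $M_n$ being the content of the present paper.

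The main obstacle I anticipate is the non-orientable bookkeeping. One must check, with care, that the Möbius strip $N_{1,1}$ is genuinely exceptional: its only essential simple closed curves are the one-sided core and the boundary-parallel ones, and a one-sided curve admits no Dehn twist, so the twisting argument correctly fails there. Dually, one must verify that as soon as a single cross-cap is accompanied by any further cross-cap or boundary component the surface does contain a two-sided essential curve crossed by an arc; locating this curve and confirming both its two-sidedness and its essentiality is the delicate point, since intersection numbers and twists behave less uniformly on non-orientable surfaces than in the classical orientable setting.
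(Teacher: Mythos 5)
The paper does not actually prove this statement: it is quoted verbatim from Dupont--Palesi \cite{DP15} (their Theorem 45), so there is no in-paper argument to compare yours against. Judged on its own terms, your outline is the standard mapping-class-group route and its skeleton is sound: the reduction from ``finitely many triangulations'' to ``finitely many isotopy classes of arcs'' is correct (every arc extends to a maximal compatible collection, and maximal collections are finite), and eliminating every surface other than the disk and the M\"obius strip by exhibiting an infinite orbit of arcs is how one expects the proof to go. The source you are replacing tends to exhibit the infinite families of pairwise non-isotopic arcs explicitly (spiralling around a handle, a second boundary component, or a pair of cross-caps) rather than via Dehn twists; your version is more uniform but imports more machinery, and that machinery is exactly where the trouble lies.

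The genuine gap is in your key lemma. On a non-orientable surface, ``two-sided and essential'' is not a sufficient hypothesis for the powers of $T_c$ to produce infinitely many arc classes: by Epstein's theorem the mapping class group of the M\"obius band rel boundary is trivial, so the Dehn twist about any curve bounding an embedded M\"obius band is isotopic to the identity, and the intersection-growth inequality simply fails for such curves. You must add the hypothesis that $c$ bounds neither a disk nor a M\"obius band, and then cite or prove the non-orientable version of the linear-growth estimate for $i(T_c^m(\alpha),\alpha)$, which carries correction terms (Stukow). In the other direction, when the surface is an annulus your separating curve is boundary-parallel, hence excluded by the usual definition of ``essential,'' even though the twist about it is precisely what you need; so the lemma's hypotheses must be loosened there. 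Your three chosen curves (separating two boundary components, encircling a handle, bounding a neighbourhood of two cross-caps) do all satisfy the corrected hypotheses, so the argument is repairable---but as stated the lemma is false, and since the entire case analysis funnels through it, it has to be restated and justified before the proof stands.
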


% Need a graph that illustrates a triangulation.

\section{Number of Triangulations of A M\"obius Strip}

Recall that the Catalan numbers are defined recursively as: 
\begin{equation} \label{eq:Cn1}
    C_{n} = \sum_{i=0}^{n-1} C_iC_{n-1-i}
= \frac{(4n-2)C_{n-1}}{n+1},
\text{ for } n \geq 1, \; C_0 = 1 
\end{equation}
%\hugh{In the interests of keeping things simple, I would just give the first recurrence, which is the one you will actually use.}

and is defined explicitly by
\begin{equation} \label{Eq::Cn}
C_n = \frac{1}{n+1} \binom{2n}{n}, 
\end{equation}

%\hugh{As I think you know, the lemma below is well-known. It would be good to provide a reference.The reference seems to be the following: \url{http://sites.mathdoc.fr/JMPA/PDF/JMPA_1838_1_3_A39_0.pdf}. There is some discussion of the history here:\url{https://web.nmsu.edu/~davidp/hist_projects/catalan.pdf}It would be good to explain that you are going to provide the simple proof because you will use a similar approach to the M\"obius strip case (normally, of a known result, one wouldn't give a proof).}
\begin{lemma} \label{Lmm::ngone}
The number of triangulations of a $(n+2)$-gon is given by $C_{n}$, \cite{Lam38}.
\end{lemma}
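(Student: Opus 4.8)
The plan is to prove the statement by strong induction on $n$, showing that the quantity $T_n$, defined as the number of triangulations of an $(n+2)$-gon, satisfies exactly the Catalan recursion \eqref{eq:Cn1}. Since both sequences obey the same recursion with the same initial value, they must coincide, giving $T_n = C_n$.

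First I would set up the base case. A triangle (that is, a $3$-gon, the case $n=1$) admits a unique triangulation, the empty one, so $T_1 = 1 = C_1$; adopting the convention that a degenerate $2$-gon (a single edge) also carries exactly one, empty, triangulation gives $T_0 = 1 = C_0$, which is needed to make the recursion bookkeeping below uniform.

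Next I would establish the recursive step. Label the vertices of the $(n+2)$-gon cyclically as $v_0, v_1, \dots, v_{n+1}$ and distinguish the boundary edge $e = v_0 v_{n+1}$. In any triangulation, $e$ lies on the boundary, hence belongs to exactly one triangle, whose third vertex is some $v_k$ with $1 \le k \le n$. This triangle $v_0 v_k v_{n+1}$ cuts the polygon into two smaller polygons that are triangulated independently: the one on vertices $v_0, \dots, v_k$ is a $(k+1)$-gon contributing $C_{k-1}$ triangulations, and the one on $v_k, \dots, v_{n+1}$ is an $(n-k+2)$-gon contributing $C_{n-k}$. Summing the product over the choices of apex,
$$T_n = \sum_{k=1}^{n} C_{k-1} C_{n-k} = \sum_{i=0}^{n-1} C_i C_{n-1-i},$$
which is precisely the right-hand side of \eqref{eq:Cn1}, so $T_n = C_n$ by induction.

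The main obstacle, and essentially the only place requiring care, is the handling of the degenerate sub-polygons that arise when the apex $v_k$ is adjacent to an endpoint of $e$ (that is, $k=1$ or $k=n$): one of the two pieces then collapses to a single edge. The convention $T_0 = C_0 = 1$ fixed in the base case is exactly what keeps these boundary terms consistent with the uniform summation, so that the index substitution $i = k-1$ turns the sum into the standard convolution form of \eqref{eq:Cn1}. Since the result is classical, I expect the write-up to be short, with the bulk of the verification being this index bookkeeping.
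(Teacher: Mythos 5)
Your proposal is correct and follows essentially the same argument as the paper: fix the boundary edge between the two distinguished adjacent vertices, locate the unique triangle containing it, split into two sub-polygons, and sum the resulting Catalan convolution by induction. The only difference is that you explicitly spell out the degenerate $2$-gon convention, which the paper leaves implicit.
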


%\hugh{It might be preferable to have the numbers appearing in the statement of the lemma be $n$ and $n+2$, rather than $n+1$ and $n+3$.}

We provide this simple proof because we will use a similar approach to the M\"obius strip case.

\begin{proof}
We prove by induction that $C_{n}$ is the number of triangulations of a $(n+2)$-gon.

Label the vertices of the $(n+2)$-gon from $1$ to $n+2$.
For each triangulation of the $(n+2)$-gon, there is a single triangle containing the edge between the vertices $1$ and $n+2$. Denote  the third vertex of this triangle by $i$. By the induction hypothesis, there are $C_{i-2}$ triangulations of the $i$-gon on one side of this triangle and $C_{n+1-i}$ triangulations of the $(C_{n+3-i})$-gon on the other side of this triangle, see Figure \ref{fig:n_gon}. Here $2 \leq i \leq n+1$. Therefore, the number of triangulations of a $(n+2)$-gon is 
\[\sum_{i=2}^{n+1}C_{i-2}C_{n+1-i}=\sum_{i=0}^{n-1} C_iC_{n-1-i} = C_{n}.\]
\end{proof}

\begin{figure}[h]\centering\includegraphics[width=0.3\textwidth]{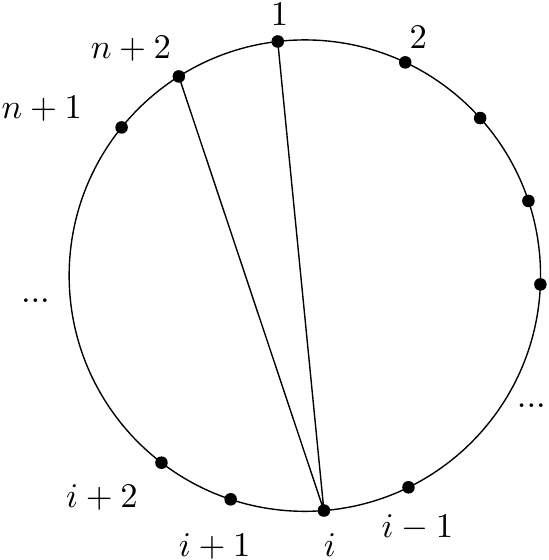}\caption{Number of triangulations in a $(n+3)$-gon}\label{fig:n_gon}\end{figure}

\begin{example}
The pentagon has 5 triangulations shown in the Figure \ref{fig:pentagone}.
%\hugh{It sounds like you're saying the hexagon has a total of five triangulations. This is especially confusing since 5 is a Catalan number, but it's the wrong one.}
\begin{figure}\centering\includegraphics[width=0.6\textwidth]{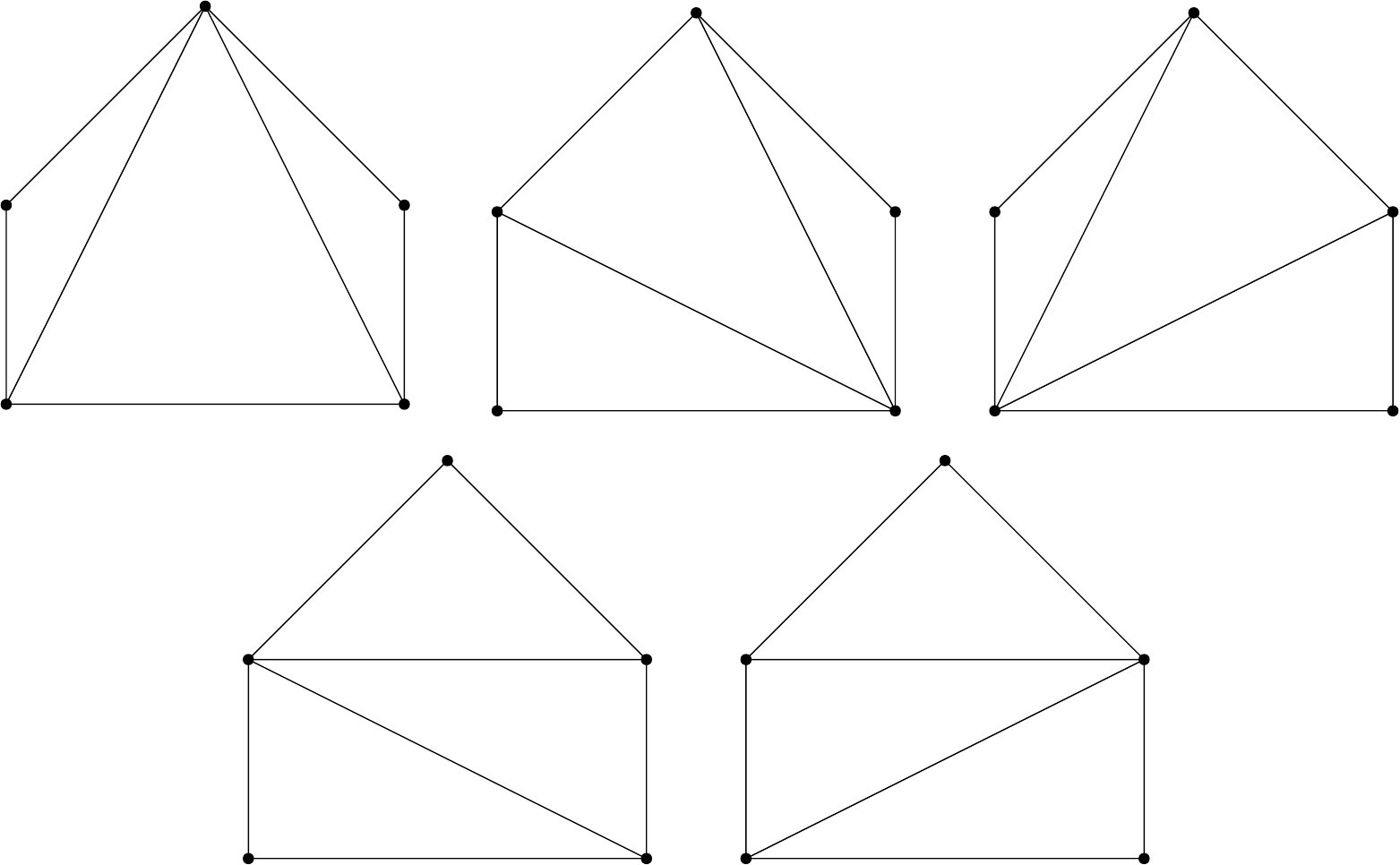}\caption{Triangulations of a pentagon}%\hugh{This figure could be smaller without losing any information.}
\label{fig:pentagone}\end{figure}
\end{example}

\begin{proposition} \label{Prp::Tn}
The number of triangulations of a M\"obius strip with $n$ marked points is given by $T_n$, where 
$$T_n = 2 \sum_{i=0}^{n-2}C_iT_{n-i-1} + nC_{n-1} \text{ for } n\geq2, \text{ with }  T_1 = 2.$$
\end{proposition}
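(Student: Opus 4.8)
The plan is to imitate the proof of Lemma~\ref{Lmm::ngone}: fix one boundary segment and dissect the triangulation according to the triangle sitting on it. Label the marked points $1,\dots,n$ in cyclic order and let $b$ be the boundary segment between $n$ and $1$. In any triangulation exactly one triangle $\Delta$ has $b$ as an edge; write $v$ for its apex and $e_1,e_2$ for the remaining two edges, joining $v$ to $1$ and to $n$ respectively. Since $b$ joins two \emph{distinct} marked points, $\Delta$ cannot be an anti-self-folded triangle (whose vertices are all identified), so $\Delta$ is an honest triangle, and I would split the whole count according to whether $e_1,e_2$ are two-sided or pass through the cross-cap. Before the induction I would record the base case $T_1=2$ directly: a single marked point admits exactly the one-sided closed curve (giving a quasi-triangle) and the one-sided arc from the point to itself (giving an anti-self-folded triangle).

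The recursive term comes from the case where $e_1$ and $e_2$ are both two-sided. Then $\Delta$ is genuinely separating: one of its sides cuts off an orientable polygon and the other cuts off a smaller M\"obius strip carrying the cross-cap, with the apex $v$ shared by both pieces. If the cross-cap lies on the $n$-side, then $e_1$ bounds a polygon on the vertices $1,\dots,v$ (a $v$-gon, with $C_{v-2}$ triangulations) while $e_2$ bounds a M\"obius strip on $v,\dots,n$ (with $T_{n-v+1}$ triangulations); if it lies on the $1$-side the roles are reversed. These are genuinely different configurations, and after setting $i=v-2$ (respectively reindexing on the other side) each contributes $\sum_{i=0}^{n-2} C_i T_{n-i-1}$, so together they give $2\sum_{i=0}^{n-2} C_i T_{n-i-1}$; the factor $2$ is exactly the choice of which side carries the cross-cap. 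The extreme index $i=n-2$, an $n$-gon together with $M_1$, is the degenerate apex in which $e_1$ or $e_2$ collapses to a loop based at $1$ or at $n$ encircling the cross-cap, and I would check that the two loop positions supply precisely the $2C_{n-2}T_1$ term. The crucial point is that the cut-off M\"obius strip is then triangulated \emph{freely}, so the factor $T_{n-i-1}$ is the full count of $M_{n-i-1}$ and the recursion closes.

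The remaining term collects the triangulations in which one of $e_1,e_2$ is a one-sided arc through the cross-cap, so that $\Delta$ abuts the cross-cap directly and no smaller M\"obius strip is split off; cutting the strip open along that one-sided arc leaves a single orientable polygon. Enumerating these by the position at which the cross-cap is attached to the boundary ($n$ choices) and by the triangulations of the residual $(n+1)$-gon ($C_{n-1}$ each) should give the contribution $n C_{n-1}$, after which summing the two cases yields the stated recurrence. The main obstacle is precisely this non-orientable bookkeeping: I must classify which isotopy classes of arcs can occur around the cross-cap, verify that the one-sided-edge configurations are disjoint from the separating ones (so nothing is double counted) and that together they are exhaustive, and pin down the residual polygon size so that the Catalan index is correct. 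This is genuinely delicate — note that a naive ``single one-sided arc times an $(n+2)$-gon'' count gives $C_n$, which already fails at $n=3$, so the $n$ distinct attachment positions must be accounted for — whereas the polygon counts themselves are routine via Lemma~\ref{Lmm::ngone}.
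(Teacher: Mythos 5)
Your overall strategy is the paper's: fix the boundary segment between $n$ and $1$, classify the unique triangle $\Delta$ sitting on it, extract the term $2\sum_{i=0}^{n-2}C_iT_{n-i-1}$ from the separating configurations (with the factor $2$ recording which side of $\Delta$ carries the cross-cap) and the term $nC_{n-1}$ from the configurations interacting with the cross-cap. Your handling of the first case, the degenerate apex, and the base case $T_1=2$ all match the paper's argument.

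The genuine gap is in the second case, which you correctly flag as the delicate point but then misdescribe rather than resolve. A triangle cannot have \emph{exactly one} edge passing through the cross-cap: a triangle is a disk, so its boundary is a two-sided closed curve, and since the boundary segment from $n$ to $1$ avoids the cross-cap, the number of edges of $\Delta$ crossing the cross-cap must be even; equivalently, as the paper argues via Figure \ref{Fig::TriangleCrosscap}, a region with exactly one side through the cross-cap admits a further compatible arc and hence is not a triangle of the triangulation. So your ``one of $e_1,e_2$ is a one-sided arc'' configuration is empty, and the enumeration you sketch for it (cutting along a single one-sided arc) does not produce the required term. The configuration that actually contributes $nC_{n-1}$ is the one in which \emph{both} $e_1$ and $e_2$ pass through the cross-cap (the paper's Type 3): for apex $i$ with $1\le i\le n$, the complement of $\Delta$ is bounded by the $i-1$ boundary segments between $1$ and $i$, one copy of the arc from $i$ to $n$, the $n-i$ boundary segments between $n$ and $i$, and one copy of the arc from $i$ to $1$ --- an $(n+1)$-gon containing no cross-cap, hence with $C_{n-1}$ triangulations, giving $nC_{n-1}$ in total. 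This parity observation is also exactly what establishes the exhaustiveness and disjointness you listed as an unresolved obstacle: the only possibilities are zero edges through the cross-cap (two subcases, by which side holds the cross-cap) or two edges through it.
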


\begin{proof}
Let $(\mathbf{S}, \mathbf{M})$ be a marked surface and let $\mathbf{T}$ be a triangulation of $(\mathbf{S}, \mathbf{M})$.

Recall that in any triangulation, each boundary edge is part of one and only one triangle. 
%\hugh{The reader would be better-prepared for this if, when you define triangulations, you explain that they do decompose into triangles.} 
Therefore, there is a single triangle containing the edge between the marked points $n$ and $1$; denote this triangle $\Delta$. It must be of one of the following three forms shown in Figure \ref{fig:Delta}.

% Maybe it would be better just to formalize the phrase here a bit, but i am not sure how to do this either. 

\begin{figure}[htbp]\centering\includegraphics[width=1\textwidth]{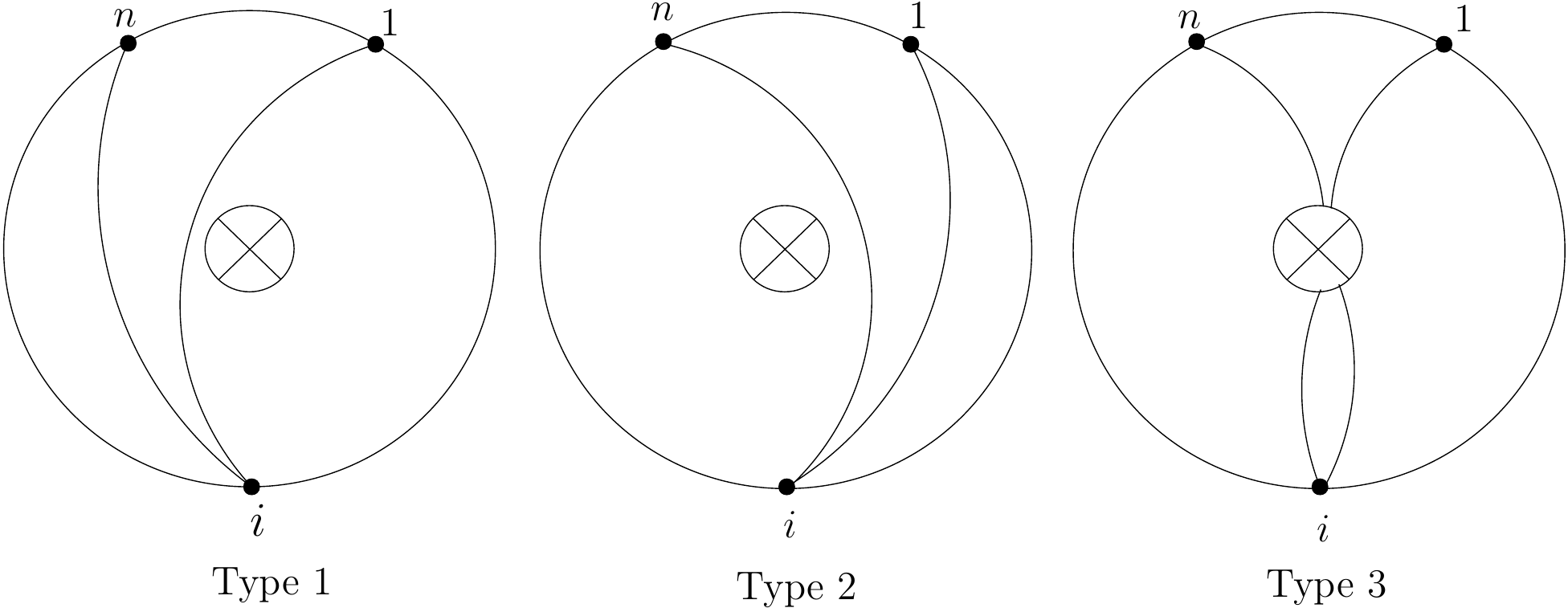}\caption{The three types of triangle containing the edge between the vertex $n$ and $1$} \label{fig:Delta}\end{figure}
%\vero{General comment for drawing arcs going through a crosscap: We should make sure that arcs goes in and out of the crosscap with the same angle.}

%\hugh{It would be good to give more justification for this claim.}

Indeed, a triangle cannot contain a cross-cap: a surface with one boundary component, three marked points, and a cross-cap is a M\"obius strip with three marked points instead of a triangle. 

Moreover, a triangle cannot have an edge going through a cross-cap. On the contrary, suppose that a triangle $\Delta$ has such an edge. Then, since we can add another arc in $\Delta$, as depicted by the dotted arc in Figure \ref{Fig::TriangleCrosscap}, $T$ was not part of maximal collection of compatible arcs, i.e a triangulation, so $T$ was not a triangle.
%This claim is easier to understand we draw the Mobiüs strip in another way:
%ADD FIGURE
%\hugh{Why couldn't exactly one side of the triangle go through the cross-cap?}

\begin{figure}[htbp]\centering\includegraphics[width=0.3\textwidth]{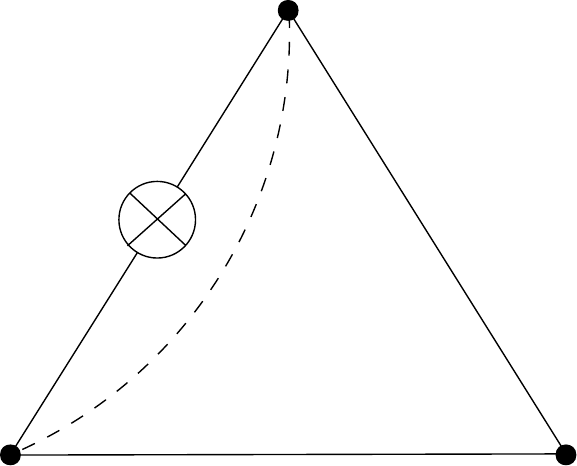}\caption{Triangle $\Delta$ with one side going through the cross-cap} \label{Fig::TriangleCrosscap}\end{figure}

Therefore, we proceed with our proof in three cases. Since Type 1 and Type 2 are mirror images, the number of triangulations occurring in each type are the same, and we discuss them in one case.

\textbf{Case 1:} Consider the triangle $\Delta$ of the form Type 1 or Type 2. Without loss of generality, we consider the case of Type 2. 

We count the number of triangulations by considering the three areas cut out by the two arcs, see Figure \ref{fig:Area}.

\begin{figure}[htbp]\centering\includegraphics[width=1\textwidth]{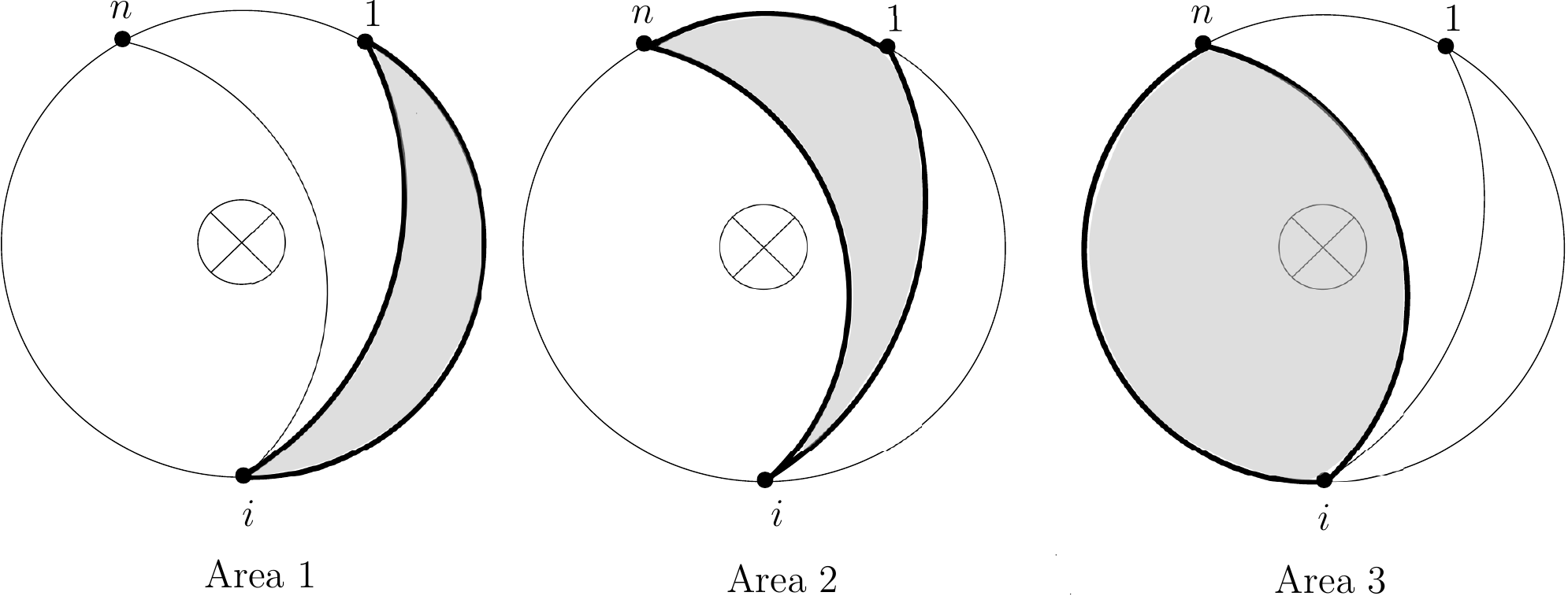}\caption{Areas in Type 2 triangulation} \label{fig:Area}\end{figure}

Area 1 includes $i$ marked points numbered from 1 to $i$. 
Since Area 1 is orientable, 
%\hugh{can't you just say "orientable". I don't know what "locally orientable" means. In fact, I would assume the Möbius strip is locally orientable because any point has a neighbourhood which is a disk or half-disk, which are orientable.} 
it is isotopic to an $i$-gon. 
Thus by the Lemma \ref{Lmm::ngone}, Area 1 has $C_{i-2}$ triangulations. 

There is no more compatible arc in Area 2; alternatively, it is a triangle. Therefore, there is only one triangulation in this area.

Area 3 is a M\"obius strip with $n-i+1$ marked points. Hence, there are $T_{n-i+1}$ triangulations in Area 3. 
%Since we know the M\"obius strip with one marked point $T_1$ has 2 triangulations, $T_{n-i+1}$ can be expressed in recursively given the formula for $T_{n-i}$. \hugh{This last sentence is out of place here, since it is not relevant to what you are trying to prove.}

Thus, the number of triangulations in this case is the product of triangulations coming from these three areas. The number of triangulations containing a triangle $\Delta$ of Type 2
%\hugh{I'd say "2" for consistency with what you've done above, and then afterwards say "The number of triangulations of Type 1 is also equal to this number."} 
is 
$$\sum_{i=2}^nT_{n-1+1}\times1\times C_{i-2}=\sum_{i=0}^{n-2}T_{n-i-1} C_{i}.$$
%\hugh{The initial values are not relevant here.}

Thus, the total number of triangulations containing triangle $\Delta$ of Type 1 and 2 in Case 1 is  
$$ 2\sum_{i=0}^{n-2}T_{n-i-1} C_{i}.$$

\textbf{Case 2:} 
In this case, we consider $\Delta$ of Type 3, in which both arcs go through the cross-cap. The M\"obius Strip is thus divided into two areas, see Figure \ref{fig:areas_3}.
%\hugh{The points where the arcs hit the crosscap are supposed to be symmetrical. They aren't very.}

\begin{figure}[htbp]\centering\includegraphics[width=0.8\textwidth]{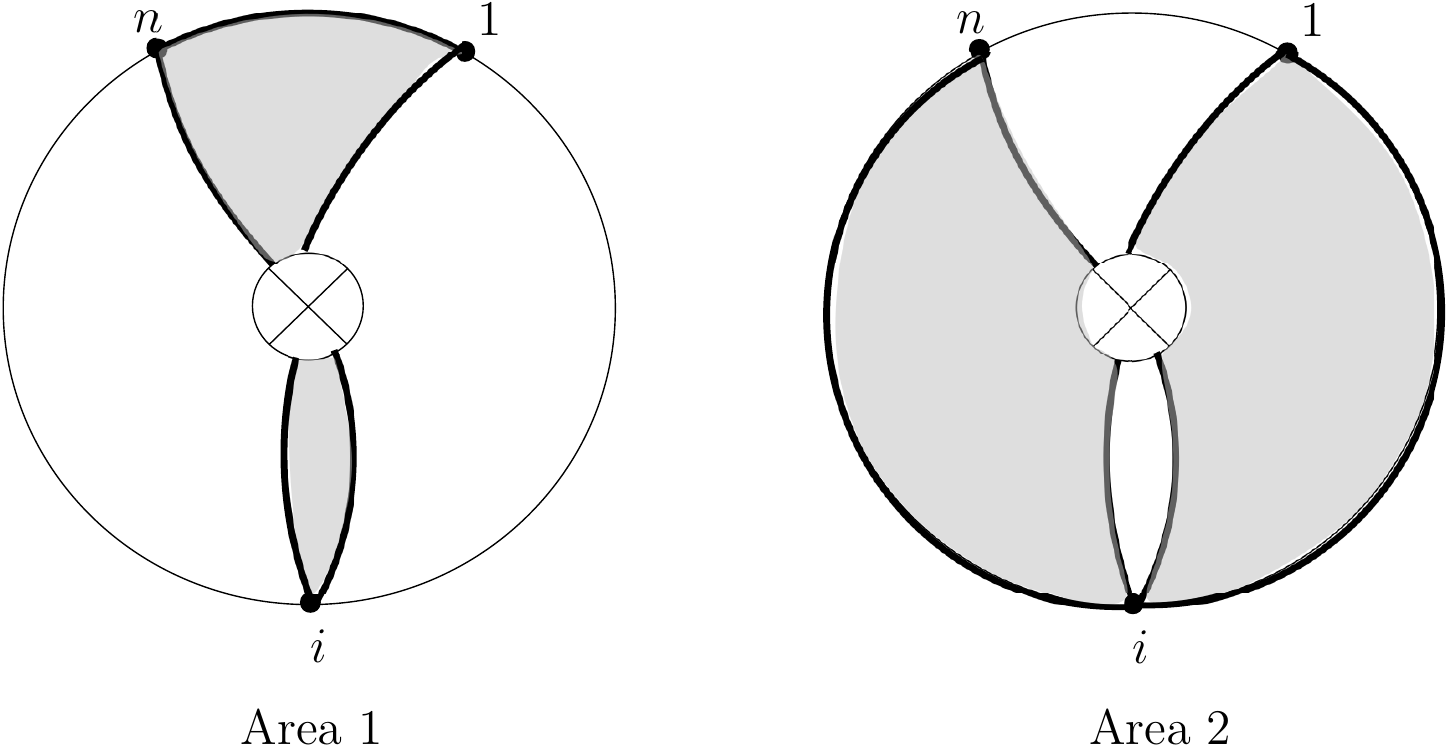}\caption{Areas in Type 3 triangulation} \label{fig:areas_3}\end{figure}

Area 1 is a triangle, and is thus already a triangulation and no other arcs can be added.

Consider the number of triangulations in Area 2. 

We first notice that the Area 2 is isotopic to an $(n+1)$-gon, because: 
\begin{itemize}
    \item $i-1$ edges between vertices $1$ and $i$,
    \item one edge between vertices $i$ and $n$ going through the cross-cap,
    \item $n-i$ edges between vertices $n$ and $i$,
    \item one edge between vertices $i$ and $1$ going through the cross-cap.
\end{itemize}
This gives us a total of $n+1$ edges; since the interior of Area 2 contains no cross-cap, it is an $(n+1)$-gon.
%Figure \ref{fig:surgery} gives an intuitive explanation. 
Therefore, it has $C_{n-1}$ triangulations.

%\begin{figure}[htbp]\centering\includegraphics[width=1\textwidth]{}\caption{Area 2 is a $n+1$-gon} \label{fig:surgery}\end{figure}
%\vero{Should we include the figure? It's not that formal.}

%\hugh{Comments on figure 8: Maybe "turn inside out" rather than "inside out". When you snip the surface twice at step 3, it would be clearer if the two resulting edges didn't both get the same double-arow labels. In step 5, the double arrows seem to have turned into single arrows and chagned direction.}

Since in this case, $1 \leq i \leq n$, the total number of triangulations can be found by 
\begin{equation}\label{Eq:case2}
    \sum_{i=1}^{n}C_{n-1} = nC_{n-1}
\end{equation}

Therefore, taking all three cases into consideration, the total number of triangulations $T_n$ on a M\"obius strip with $n$ marked points is the sum: 
\begin{align*}
T_n = 2 \sum_{i=0}^{n-2} C_iT_{n-i-1} + nC_{n-1} \text{ for } n\geq2
,\; T_1 = 2
\end{align*}
\end{proof}

We now want to simplify this formula and obtain a direct formula for the number of triangulations of a M\"obius strip.

\begin{proposition}
The number of triangulations of a M\"obius strip with $n$ marked points is $$T_n = 4^{n-1}+ \binom{2n-2}{n-1}.$$
\end{proposition}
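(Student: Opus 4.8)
The plan is to encode the recurrence of Proposition \ref{Prp::Tn} into a single generating-function identity and then read off the coefficients. First I would introduce the ordinary generating functions $C(x) = \sum_{n \geq 0} C_n x^n$ and $T(x) = \sum_{n \geq 1} T_n x^n$, and recall the closed form $C(x) = \frac{1 - \sqrt{1-4x}}{2x}$. This immediately supplies the two identities I expect to drive the whole computation: $2xC(x) = 1 - \sqrt{1-4x}$, and hence $1 - 2xC(x) = \sqrt{1-4x}$.

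Next I would translate each piece of the recurrence $T_n = 2\sum_{i=0}^{n-2} C_i T_{n-i-1} + nC_{n-1}$ (valid for $n \geq 2$, with $T_1 = 2$) into an operation on $T(x)$. The convolution $\sum_{i=0}^{n-2} C_i T_{n-i-1}$ is exactly the coefficient of $x^{n-1}$ in the product $C(x)T(x)$, so after multiplying by $2x^n$ and summing over $n \geq 2$ it contributes $2xC(x)T(x)$ (the bottom index works out because $T$ has no constant term). The inhomogeneous term $nC_{n-1}$ is the one requiring care: writing $g(x) = xC(x) = \frac{1-\sqrt{1-4x}}{2}$, I would note that $\sum_{n \geq 1} nC_{n-1}x^n = xg'(x)$, so that $\sum_{n \geq 2} nC_{n-1}x^n = xg'(x) - x$ after removing the $n=1$ term. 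Collecting everything and adding back the base contribution $T_1 x = 2x$, I would arrive at the functional equation $T(x)\bigl(1 - 2xC(x)\bigr) = x\bigl(1 + g'(x)\bigr)$.

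The denominator now simplifies to $\sqrt{1-4x}$ by the identity above, while a direct differentiation gives $g'(x) = \frac{1}{\sqrt{1-4x}}$. Substituting both, the right-hand side splits cleanly and I would obtain
\[
T(x) = \frac{x\bigl(1 + g'(x)\bigr)}{\sqrt{1-4x}} = \frac{x}{\sqrt{1-4x}} + \frac{x}{1-4x}.
\]
Finally I would extract coefficients using the two standard expansions $\frac{1}{\sqrt{1-4x}} = \sum_{k \geq 0} \binom{2k}{k} x^k$ and $\frac{1}{1-4x} = \sum_{k \geq 0} 4^k x^k$; the factor of $x$ shifts these into $\binom{2n-2}{n-1}$ and $4^{n-1}$ in degree $n$, yielding $T_n = 4^{n-1} + \binom{2n-2}{n-1}$, with the check $T_1 = 1 + 1 = 2$.

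I expect the main obstacle to be the bookkeeping around the inhomogeneous term $nC_{n-1}$: correctly realizing it as $xg'(x)$, differentiating the radical, and tracking the index shifts (in particular the single $n=1$ correction term and the separate base case $T_1$) so that the functional equation emerges with exactly the factor $1 + g'(x)$. Once that is in place, the cancellation $1 - 2xC(x) = \sqrt{1-4x}$ makes the remainder essentially automatic. An alternative would be a direct strong induction, substituting the claimed formula into the recurrence, but that route forces one through Catalan-type convolution identities for $\sum_i C_i 4^{n-i}$ and $\sum_i C_i \binom{2n-2i-4}{n-i-2}$, which are considerably messier than the single generating-function manipulation above.
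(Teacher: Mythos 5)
Your generating-function argument is correct, and it takes a genuinely different route from the paper. The paper stays entirely elementary: it proves by induction the identity $2\sum_{i=0}^{n-2} C_i T_{n-i-1} = 4^{n-1}$, the key step being the derivation of the first-order recurrence $T_k = 4T_{k-1} - 2C_{k-2}$ from the Catalan relation $kC_{k-1} = (4k-6)C_{k-2}$, after which the convolution telescopes to $4\cdot 4^{n-2} - 4C_{n-2} + 4C_{n-2}$; the binomial form of $nC_{n-1}$ is then read off directly. Your version instead packages the full recurrence into the functional equation $T(x)\bigl(1-2xC(x)\bigr) = x\bigl(1+g'(x)\bigr)$ and exploits $1-2xC(x)=\sqrt{1-4x}$ and $g'(x)=1/\sqrt{1-4x}$ to split $T(x)$ as $\frac{x}{\sqrt{1-4x}}+\frac{x}{1-4x}$; all the index bookkeeping you flag (the absent $T_0$, the $n=1$ correction, the base case $T_1x=2x$) checks out, and the coefficient extraction is standard. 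What your approach buys is transparency about \emph{why} the answer splits into a central-binomial part and a power-of-$4$ part --- the two summands of $T(x)$ are visibly the generating functions of $\binom{2n-2}{n-1}$ and $4^{n-1}$ --- whereas the paper's answer to that question is hidden inside an induction one must already know the target of. What the paper's approach buys is self-containedness: it needs only the two Catalan recurrences already quoted in the text, with no analytic closed form for $C(x)$ and no differentiation of radicals. One small caveat: your closing remark that a direct induction would require messy convolution identities slightly undersells the elementary route, since the paper's induction avoids substituting the closed form into the convolution altogether.
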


\begin{proof}
%\hugh{There is something a little tricky about the way this argument goes, and I don't think the way it is currently written is entirely correct. The point is that there are actually two formulas which you want to prove by induction \emph{simultaneously}: the formula given in the statement of the proposition, and the formula for $\sum_i C_iT_{n-1-i}$. You cannot prove first one, then the other: each of their proofs relies on your knowing both formulas for smaller values of $n$. So this is perfectly fine, but you need to be more clear that what you are proving by induction are these two statements together. (If it's helpful, you can call the statements $P_n$ and $Q_n$: then what you must do is prove that $P_i$ for $i<n$ and $Q_i$ for $i< n$ implies $P_n$ and $Q_n$.) It would also help to be more clear about where you switch from proving $P_n$ to proving $Q_n$.}
We proceed by induction to show that
\begin{equation} \label{Eq1}
2 \sum_{i=0}^{n-2} C_iT_{n-i-1} = 4^{n-1}.    
\end{equation}
Clearly, when $n=2$, we have
\[ 2 \sum_{i=0}^{n-2} C_iT_{n-i-1} = 2 \sum_{i=0}^{0} C_iT_{i+1} = 2 (C_0 T_1) = 4 = 4^{1} = 4^{n-1}, \]
which verifies Equation \ref{Eq1}.
Now, suppose Equation \ref{Eq1} is true for all $k < n$ for some positive integer $n$ and prove that it is also verified for $k=n$.
%$2\sum_{i=0}^{k-2} C_iT_{k-i-1} = 4^{k-1}$ for all $k \leq n$ for a certain $n$ and prove that $\sum_{i=0}^{n-1} C_iT_{n-i} = 4^{n}$.
Using both our induction hypothesis and Proposition \ref{Prp::Tn}, for $k < n$, we know that
\[ T_k = 2 \sum_{i=0}^{k-2}C_iT_{k-i-1} + kC_{k-1} = 4^{k-1} + kC_{k-1}. \]
By Equation \ref{eq:Cn1}, we obtain, for $2 \leq k < n$,
\[kC_{k-1} = (4k-6)C_{k-2}.\]
Using this equation, we compute:
\begin{align*}
    T_k &= 4^{k-1} + kC_{k-1}\\
        &= 4\left(4^{k-2} + (k-1)C_{k-2}\right) - 4(k-1)C_{k-2} + kC_{k-1}\\
        &= 4T_{k-1} - 4(k-1)C_{k-2} + (4k-6)C_{k-2}\\
        &= 4T_{k-1} - 2C_{k-2}.
\end{align*}
Therefore,
\begin{align*}
    2 \sum_{i=0}^{n-2} C_i T_{n-1-i} 
        &= 2 \sum_{i=0}^{n-3} C_i T_{n-1-i} + 2 C_{n-2}T_1 \\
        &= 2 \sum_{i=0}^{n-3} C_i \left( 4T_{n-i-2} -2C_{n-i-3} \right) + 4C_{n-2} \\
        &= 4\left( 2\sum_{i=0}^{n-3} C_i T_{n-1-i} \right) - 4 \sum_{i=0}^{n-3} C_i C_{n-i-3} + 4C_{n-2}\\
        &= 4 \cdot 4^{n-2} -4C_{n-2} + 4C_{n-2} \\
        &= 4^{n-1}.
\end{align*}
Since we have now proven equation \ref{Eq1}, we have now shown that
\[ T_{n}= 4^{n-1} + nC_{n-1}. \]

Let's simplify this formula again: by Equation \ref{Eq::Cn}, we have
\[nC_{n-1} = \frac{n}{n+1-1}\binom{2n-2}{n-1} = \binom{2n-2}{n-1}.\]
Hence, by Proposition 3.3, 
\[ T_n=4^{n-1} + \binom{2n-2}{n-1}. \]
\end{proof}

\section{Connection to Quasi-Cluster Algebras} \label{Sect:QuasiCluster}
%Definition of quasi-cluster algebras
%Connection between clusters and triangulations
%State that your result count the number of clusters in the cluster algebras arising from a Mobius strip
In this section, we apply our previous results to quasi-cluster algebras, which are generalizations of cluster algebras.  
%to count the number of M\"obius strip is the only finite type in

Introduced in 2002 by Fomin and Zelevinsky \cite{FZ02, FZ03, FZ07}, cluster algebras create an algebraic framework for canonical bases and total positivity. They related to various areas of mathematics such as combinatorics, representation theory of algebras \cite{BMRT07}, %\hugh{Representation theory seems well-represented compared to the other subjects.} 
mathematical physics \cite{AHBHY18}, Poisson geometry \cite{GSV10}, Lie theory \cite{GLS13}, and, of course, triangulation of surfaces \cite{FST08}, just to name a few.
%\hugh{It seems odd to cite Ralf not FST here. If you are deliberately chosing introductory lecture notes, maybe say so. But I guess it's impossible to do that for some of the topics. It would be okay to say something like "Among the many resources on cluster algebras, we single out the following particularly appealing introductory notes..." You could also cite Fomin's cluster algebras portal web page if you want.} 
Indeed, in 2008, Fomin, Shapiro and Thurston introduced a particular class of cluster algebras, called \textit{cluster algebras from surfaces}, \cite{FST08}. Later work from Dupont and Palesi introduced the \textit{quasi-cluster algebras} from non-orientable surfaces \cite{DP15}. Then, Wilson studied the shellability and sphericity of the arc complex \cite{Wil18}, and proved the positivity theorem for quasi-cluster algebras \cite{Wil19}.  

%Quasi-cluster algebra is defined based on its ground ring $\mathbb{ZP}$ and each quasi-cluster can be mutated by \textit{mutation}. The quasi-cluster algebra is defined with the follows:

In this section, we will not define cluster algebras, but rather quasi-cluster algebras, which generalize them.

%\hugh{It would be good to explain that you are not going to define cluster algebras, but rather quasi-cluster algebras, which generalize them. It is annoying, or at least quasi-annoying, to have all these quasis around. I suggest that there is no need to refer to cluster variables, clusters, and seeds as quasi-cluster variables, quasi-clusters, and quasi-seeds, but you should at least be consistent.}

Quasi-cluster algebras are commutative $\mathbb{Z}$-algebras of Laurent polynomials with a distinguished set of generators, called \emph{cluster variables}, grouped into sets called \emph{clusters}. The set of all cluster variables is constructed recursively from a set of initial cluster variables using an involutive operation called \emph{mutation}. Let's define these concepts formally.

%To do so, consider a marked surface $\SM$ with $n$ marked points on the boundary of $\mathbf{S}$ and at least one marked point for each %\hugh{I think it is clearer to say "for each" than "by"} boundary component. 
The difference between classical cluster algebras from surfaces, as defined by Fomin and Zelevinsky \cite{FZ02} and Fomin, Shapiro Thurston, \cite{FST08} and quasi-cluster algebras, as defined by Dupont and Palesi, \cite{DP15}, is that cluster algebras arise only from orientable surfaces, while quasi-cluster algebras arise from both orientable and non-orientable surfaces.

%$m$ boundary segments with $n, m \geq 1$.
%\hugh{Aren't there an equal number of marked points as of boundary segments?}
Let $\F$ denote the field of rational functions in $n+k$ indeterminates, that is
\[ \F = \left\{ \left. \frac{p(a_1, \dots, a_{n}, b_1, \dots b_k)}{q(a_1, \dots, a_{n}, b_1, \dots, b_k)} \right| p, q \in \Z[a_1, \dots, a_{n}, b_1, \dots, b_k]\right\}. \]

\begin{definition}
Let $K$ be a field and $k$ a sub-field of $K$. A set $X = \left\{ x_1, \dots, x_m \right\} \subseteq K$ is \df{algebraically independent} on $k$ if $(x_1, \dots x_m)$ is not a root for any non-zero polynomial on $k \left[ t_1, \dots, t_m \right]$.
\end{definition}

\begin{example}
The singletons $\{ \pi \}$ and $\{ \sqrt{2\pi + 1} \} \subseteq \mathbb R$ are algebraically independent on $\mathbb Q$, but the set $\{ \pi, \sqrt{2\pi + 1} \}$ is not. Indeed, $\left( \sqrt{2\pi + 1} \right)^2 - 2\pi -1 = 0$, so the non-zero polynomial $f(t_1,t_2) = t_1^2 - 2t_2 -1$ is null in $\left( \sqrt{2\pi + 1}, \pi \right)$.
\end{example}

To each boundary segments $b$, we associate an indeterminate $y_b$ such that $\y = \left\{ y_b | b \in \BSM \right\}$ is algebraically independent. The elements of $\y$ are called the \emph{coefficients}.
%\hugh{The boundary segment variables were previously being indexed by numbers, but now they are being indexed by segments. I think instead you probably want to number the boundary segments.} 
The \df{ground ring} is set as
\[ \mathbb{Z}\mathbb{P} = \mathbb{Z}[y_b^{\pm 1}\mid b\in \mathbf{B}(\mathbf{S}, \mathbf{M})].\]

\begin{definition}
 An \emph{initial seed} is a triplet $(\x,T)$ such that: \begin{itemize}
     \item $T$ is a triangulation of $\SM$;
     \item $\x = \{ x_t | t \in T \}$ is a \emph{cluster} and together with $\y$, it forms a generating set of $\F$.
 \end{itemize}
\end{definition}

%\begin{definition}
%A set $S$ is said \emph{generating} over of a field $K$ if the smallest subfield of $K$ containing $S$ is $K$ itself.
%\vero{To continue!}
%   \end{definition}

%\begin{definition}
% A \df{seed} $(T, \x)$ is given by a triangulation $T$ of a $\SM$ and by a \df{cluster} $\x = \{ x_t | t \in T \}$ such that $\x$ is a free generating set of the field $\F$ over $\ZP$.
%\end{definition}
%\hugh{You define what it means to be algebraically independent over a field, but then you write "free generating set" (which seems wrong) over something which is not a field.}
%\vero{We don't understand what you mean.}
%\hugh{Well I don't understand what you mean either :).What I meant is that you go to the trouble to define "algebraically independent" but then you don't use that notion, so why did you define it? Instead, you talk about a "free generating set". But I don't know what you mean by that expression. I wish there was a way to say this that would avoid getting hung up on this, since it's really a minor point. For example, if you define the initial seed and the mutation procedure, you could just say that the seeds are what you get by mutation, without saying anything about the specific properties that the entries in the seed must have.}

In other words, a seed is triangulation with a variable associated to each arc. 
%\hugh{Maybe you mean "with a variable associated to each quasi-arc? Or "an element of the field ... associated to each quasi-arc".}

From a given seed, we can obtain a new one by a process called \df{mutation}. First, let's define the mutation only on triangulations. 
%\hugh{I first thought you meant you were defiyou mean, you should write quasi-triangulations here. And you might sayning it on triangulations not quasi-triangulations. Since that isn't what  something like "we first define mutation on quasi-triangulations, and subsequently describe how mutation acts on the labels". But it's not like you are going to spend a long time on mutation of triangulations, so maybe the comment is unnecessary in the first place.}

\begin{proposition}\cite[Proposition 9]{DP15}
 Let $T$ be a triangulation 
 %\hugh{I think you mean quasi-triangulation.} 
 of a marked surface $\SM$ and let $t$ be an arc of $T$. There exists a unique arc $t' \neq t$ such that $\left(T\setminus \{ t\}\right) \cup t'$ is still a triangulation of $\SM$.
\end{proposition}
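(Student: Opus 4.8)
The plan is to reduce the global statement to a local retriangulation problem around $t$, and then to settle that local problem by a case analysis on the faces of $T$ meeting $t$. First I would invoke the fact that any two triangulations of a fixed surface $\SM$ have the same number of arcs: for the M\"obius strip this is the earlier proposition that a triangulation of $M_n$ has exactly $n$ arcs, for polygons it is built into Lemma~\ref{Lmm::ngone}, and by Theorem~\ref{Thm::NumberArcs} these are the only surfaces of interest with finitely many triangulations. Hence $T \setminus \{t\}$ contains one fewer arc than any triangulation, so every triangulation containing $T \setminus \{t\}$ is obtained from it by adjoining exactly one arc. This turns both existence and uniqueness into a statement about the region $R$ obtained by deleting $t$, namely the union of the closures of the faces of $T$ incident to $t$: an arc $t'$ with $\left(T \setminus \{t\}\right) \cup \{t'\}$ a triangulation is precisely an arc in the interior of $R$, compatible with $\partial R$, that cuts $R$ into triangles and quasi-triangles. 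So I only need to count the admissible arcs inside the single region $R$ and check that there are exactly two, one of them being $t$.

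\textbf{Case 1 (generic).} If $t$ is a two-sided arc whose two incident faces are distinct ordinary triangles, then $R$ is a quadrilateral whose interior is a disk, $t$ is one of its two diagonals, and the only other arc cutting $R$ into two triangles is the opposite diagonal, exactly as in the polygon argument behind Lemma~\ref{Lmm::ngone}. This produces a unique $t' \neq t$.

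\textbf{Case 2 (cross-cap configurations).} If $t$ is a one-sided closed curve through the cross-cap, or is incident to a quasi-triangle, or is a side of an anti-self-folded triangle, then the two sides of $t$ need not bound distinct triangles and $R$ may contain the cross-cap. Here I would describe $R$ explicitly in the cross-cap model and enumerate directly the arcs compatible with $\partial R$ that split $R$ into triangles and quasi-triangles, using the earlier observations that no triangle may contain the cross-cap nor have a side running through it. In each such configuration one exhibits the single alternative arc: when $t$ is the one-sided curve, $t'$ is the boundary loop of the associated quasi-triangle, and conversely; when $t$ borders an anti-self-folded triangle, deleting it merges the identified edges into a region admitting exactly one other admissible arc.

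The main obstacle will be Case 2. Unlike the quadrilateral of Case 1, the region around the cross-cap has no evident two-diagonal symmetry, and the faces on the two sides of $t$ may coincide, so one cannot simply invoke ``the opposite diagonal.'' The work there is to (i) give a correct description of $R$ as a surface-with-boundary in the cross-cap picture, (ii) list every arc compatible with $\partial R$ that triangulates $R$, invoking the prohibition on triangles meeting the cross-cap to discard spurious candidates, and (iii) confirm that the list has exactly one element besides $t$. The genuinely delicate point is existence in these degenerate configurations, that is, showing a valid $t' \neq t$ always appears, since this is precisely where a one-sided arc is exchanged for a two-sided boundary loop (or vice versa), and one must verify that such an exchange is always available and yields a bona fide triangulation.
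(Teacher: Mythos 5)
First, a point of context: the paper itself does not prove this proposition; it is quoted from \cite[Proposition 9]{DP15}. So your attempt can only be measured against the standard argument, which is indeed the strategy you outline: use the invariance of the number of arcs to reduce to adjoining a single arc in the local region $R$, then do a case analysis. Your global reduction and your Case 1 are fine. But the proposal has a genuine gap, and it sits exactly where you yourself flagged it: Case 2 is a list of tasks (i)--(iii), not an argument. The enumeration of admissible arcs in the cross-cap regions \emph{is} the content of the proposition for non-orientable surfaces, and you defer it entirely. Moreover, the one concrete claim you do commit to there is false: when $t$ is the one-sided closed curve of a quasi-triangle, its exchange partner is not ``the boundary loop of the associated quasi-triangle'' --- that monogon is already an arc of $T \setminus \{t\}$, so adjoining it returns $T \setminus \{t\}$ itself, which by your own counting argument has one arc too few. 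The correct $t'$ is the arc from the monogon's vertex running through the cross-cap, which converts the quasi-triangle into an anti-self-folded triangle; this is precisely the exchange encoded in the paper's mutation types 2 and 3 (Equations \ref{Eq::Mutation2} and \ref{Eq::Mutation3}), and it is what makes $T_1 = 2$ in Proposition \ref{Prp::Tn}.

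Second, your case dichotomy is not exhaustive, so the proof has a hole already at the level of its structure. A two-sided arc $t$ can run through the cross-cap while its two incident faces are distinct ordinary triangles sharing a \emph{second} edge; by your stated trigger (``two-sided arc whose two incident faces are distinct ordinary triangles'') this lands in Case 1, yet $R$ is not an embedded quadrilateral disk: it is the ``rectangle containing the cross-cap'' of the paper's mutation type 4 (Figure \ref{Fig::Mutation4}), a square with a pair of boundary sides identified in the ambient surface, where both admissible diagonals pass through the cross-cap. The ``opposite diagonal'' argument imported from Lemma \ref{Lmm::ngone} does not apply verbatim there; one must re-enumerate the arcs of the identified square and check that no further candidates (the one-sided curve, or arcs isotopic to identified sides) complete $T \setminus \{t\}$ to a triangulation. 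This configuration occurs already in $M_2$ when both arcs of the triangulation pass through the cross-cap, and your Case 2 conditions (one-sided curve, quasi-triangle, anti-self-folded triangle) do not catch it. A smaller but real issue: you justify the equal cardinality of triangulations only for polygons and M\"obius strips via finiteness, whereas the proposition is stated for arbitrary marked surfaces; in general you need the invariance of the arc count proved in \cite{DP15} rather than Theorem \ref{Thm::NumberArcs}.
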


\begin{example}
Figure \ref{Fig::M3} depicts the mutation in a M\"obius strip with 3 marked points $M_3$, where each line indicates a mutation. 
\begin{figure}[htbp]\centering\includegraphics[width=1\textwidth]{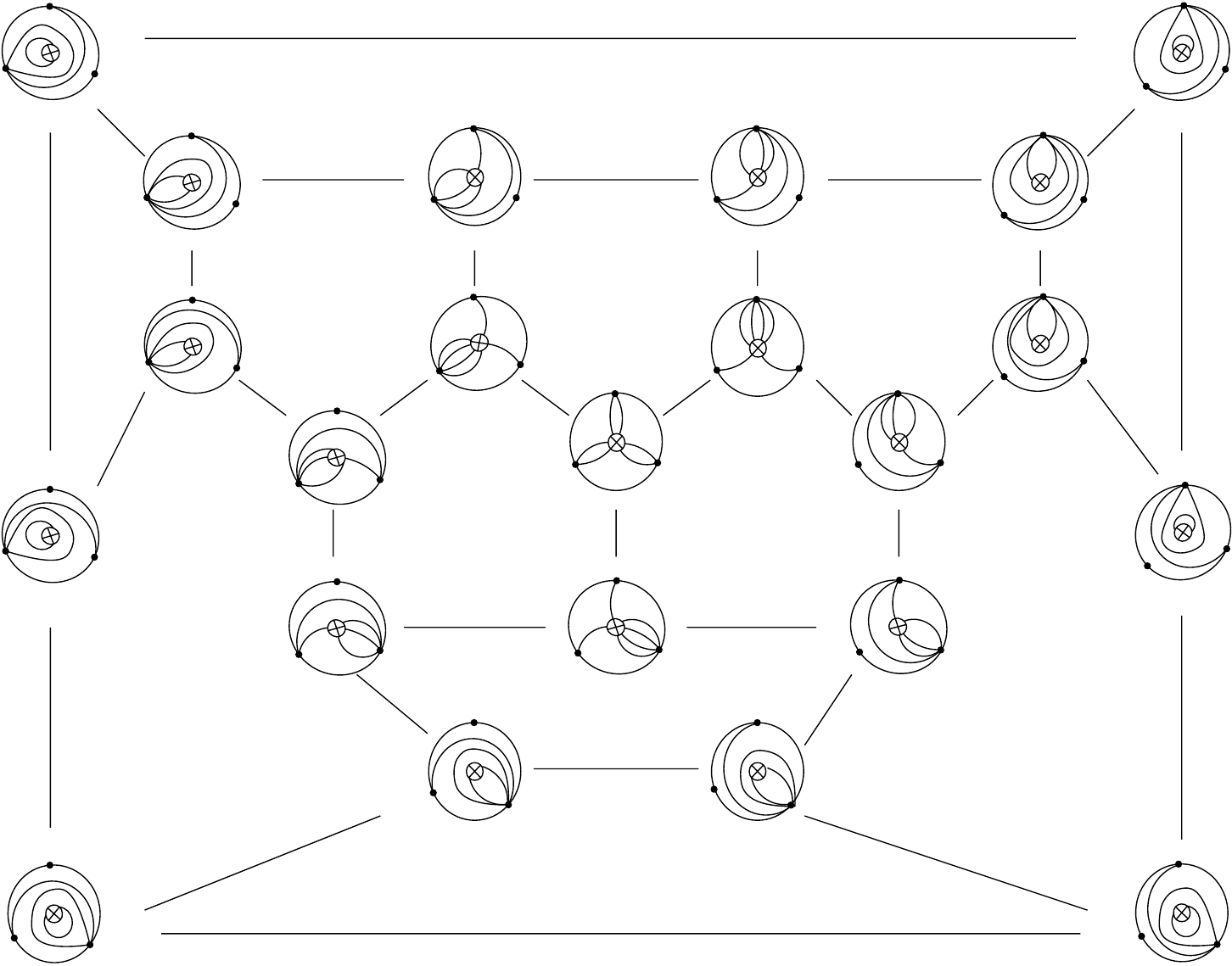}\caption{Mutations on the M\"obius strip with three marked points} \label{Fig::M3}\end{figure}
\end{example}

\begin{definition}
 Let $(T, \x)$ be an initial seed and $t \in T$. The \emph{mutation} of $(T, \x)$ in the direction $t$ transforms $(T, \x)$ in a \emph{seed} $\mu_t(T, \x) = (\mu_t(T), \mu_t(\x))$ where $\x = \x \setminus \{ x_t \} \cup \{ x_{t'} \}$ with $x_{t'}$ such that:
 \begin{enumerate}
    \item If the arc $t$ is the diagonal in a rectangle $abcd$ that creating two triangles as shown in Figure \ref{Fig::Mutation1},
    then the relation is the following: 
    \begin{equation} \label{Eq::Mutation1}
     x_tx_t'=x_ax_c+x_bx_d.
    \end{equation}
    This is called the Ptolemy relation because it is analogous to the relation that holds among the lengths of the sides and diagonals of a cyclic quadrilateral by Ptolemy's theorem.
    %\hugh{This makes it sound like the Ptolemy relation is something we already know is true. But so far as I know, here, we are just making a definition, and a definition can't really follow from anything. You could for example say "This is called the Ptolemy relation because it is analogous to the relation that holds among the lengths of the sides and diagonals of a cyclic quadrilateral by Ptolemy's theorem."}
    \begin{figure}[htbp]\centering\includegraphics[width=1\textwidth]{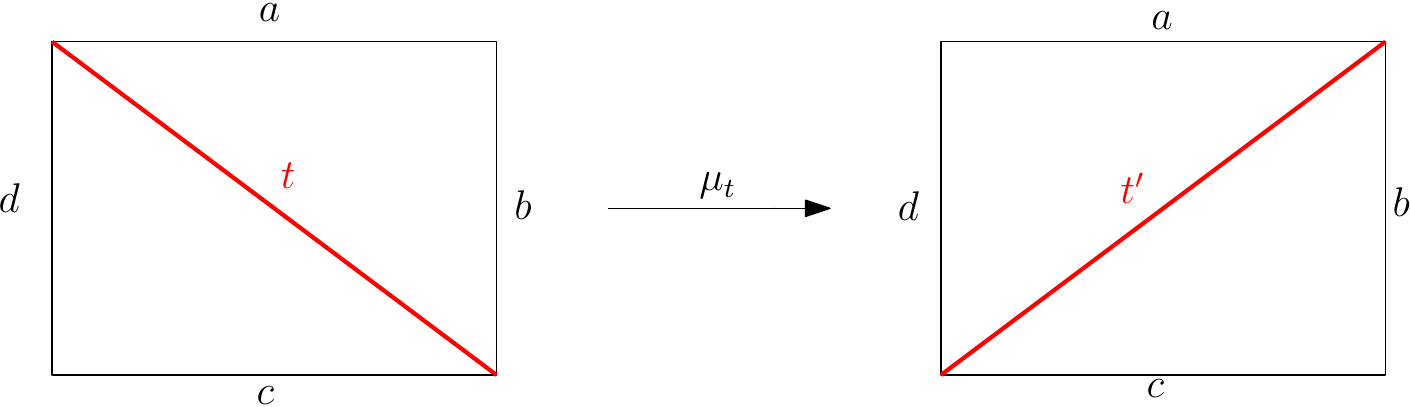}\caption{Mutation of type 1} \label{Fig::Mutation1}\end{figure}
    
    \item 
    Consider an anti-self-folded-triangle, as in Figure \ref{Fig::Mutation2}.
    Let $t$ be the arc corresponding to the two identified edges. Then, the relation is 

    \begin{equation} \label{Eq::Mutation2}
   x_tx_t' = x_a.
    \end{equation}

    \begin{figure}[htbp]\centering\includegraphics[width=0.8\textwidth]{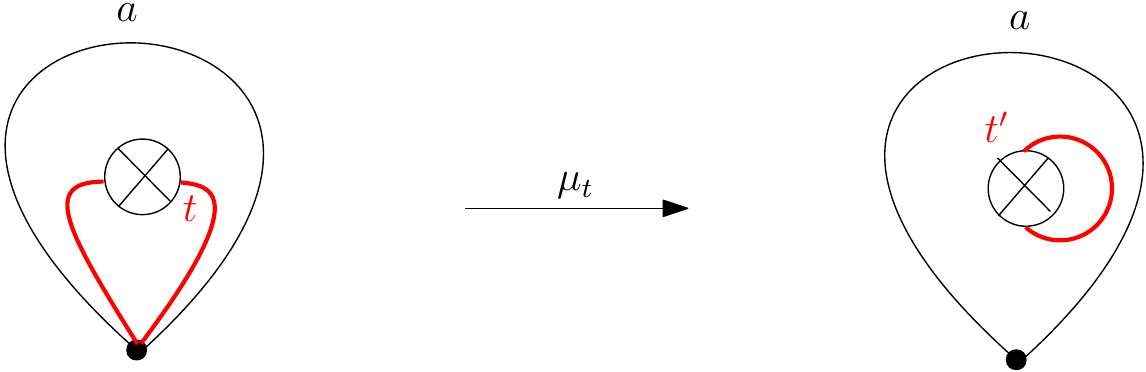}\caption{Mutation of type 2} \label{Fig::Mutation2}\end{figure}
    
    \item If a one-sided closed curve $t$ is in the triangulation with boundary $y$ as in Figure \ref{Fig::Mutation3}, the relation is
    \begin{equation} \label{Eq::Mutation3}
    x_tx_t' = x_a.
    \end{equation}
    
    \begin{figure}[htbp]\centering\includegraphics[width=0.8\textwidth]{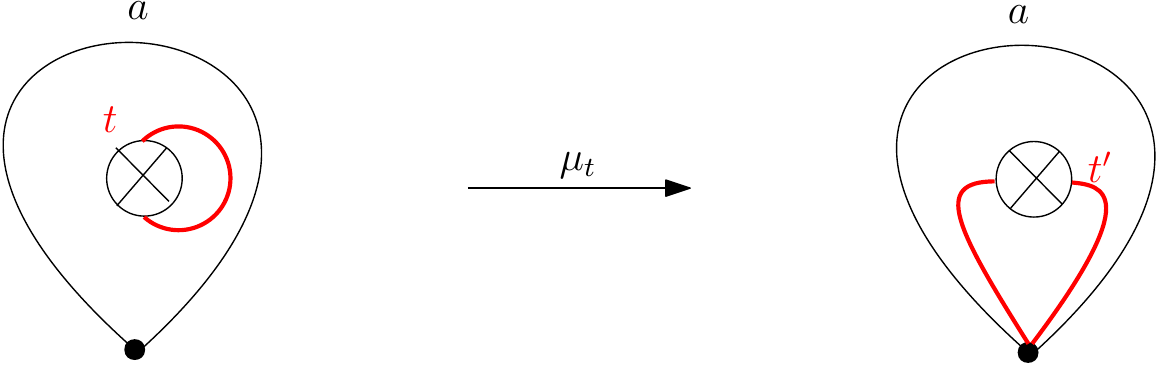}\caption{Mutation of type 3} \label{Fig::Mutation3}\end{figure}
    
    \item Let $t$ be the diagonal in a rectangle with sides $a, b, c, d$, as Figure \ref{Fig::Mutation4} shows below, then the relation is 
    
    %If $t$ is involved in a triangulation $(y_1, y_2, t)$ with an one-sided closed curve $x_a$ inside as the figure \ref{Fig::MutationEx} shows below, then the relation is 
    \begin{equation} \label{Eq::Mutation4}
    x_tx_t' = (x_b+x_c)^2+x_ax_bx_c.
    \end{equation}
 %\hugh{For someone who knows cluster algebras, this is probably the relation that looks the strangest. Is it possible to say a few words about where it comes from?}
    
    \begin{figure}[htbp]\centering\includegraphics[width=0.8\textwidth]{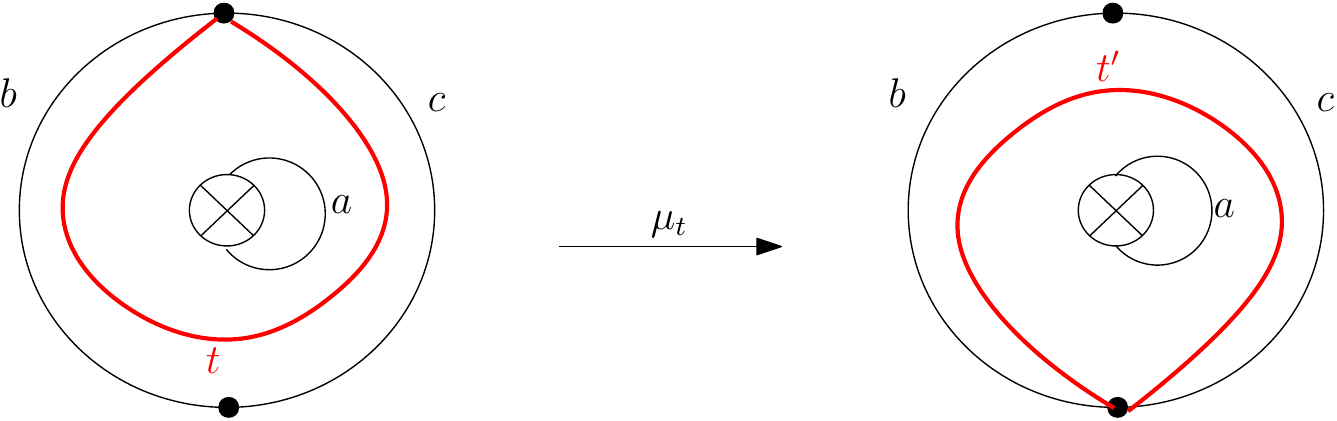}\caption{Mutation of type 4} \label{Fig::Mutation4}\end{figure}
    
 \end{enumerate}
 In general, a \emph{seed} is obtained by a sequence of mutations from the initial seed.
 %\begin{figure}[htbp]\centering\includegraphics[width=1\textwidth]{}\caption{Mutation of Seeds} \label{Fig::MutationEx}\end{figure}
 % refer to DP Page 450
\end{definition}
%\hugh{It's a bit wasteful of space to have both points (2) and (3), each with their own figure, when the information in the two points is essentially the same.  This is the kind of thing which might be less important in this day and age when many things are online-only, so space is not really a consideration in the same way. And I do recognize that it is possibly helpful for the reader that they can very clearly see each of the possibilities laid out for them.}

%\hugh{You label arcs by $t$, even though the variable in question is $x_t$, but you label other arcs by $y$ when the variable is in fact $y$. Also, it's a bit worrying to use $y$, as if you meant that the arc was really a boundary arc, because it needn't be, depending on how the picture sits in the bigger surface.}

\begin{example}
Figure \ref{Fig::1mutM3} shows a mutation of a seed in a M\"obius strip with 3 marked points $M_3$. $x_3'$ is the new cluster variable.
\begin{figure}[htbp]\centering\includegraphics[width=0.8\textwidth]{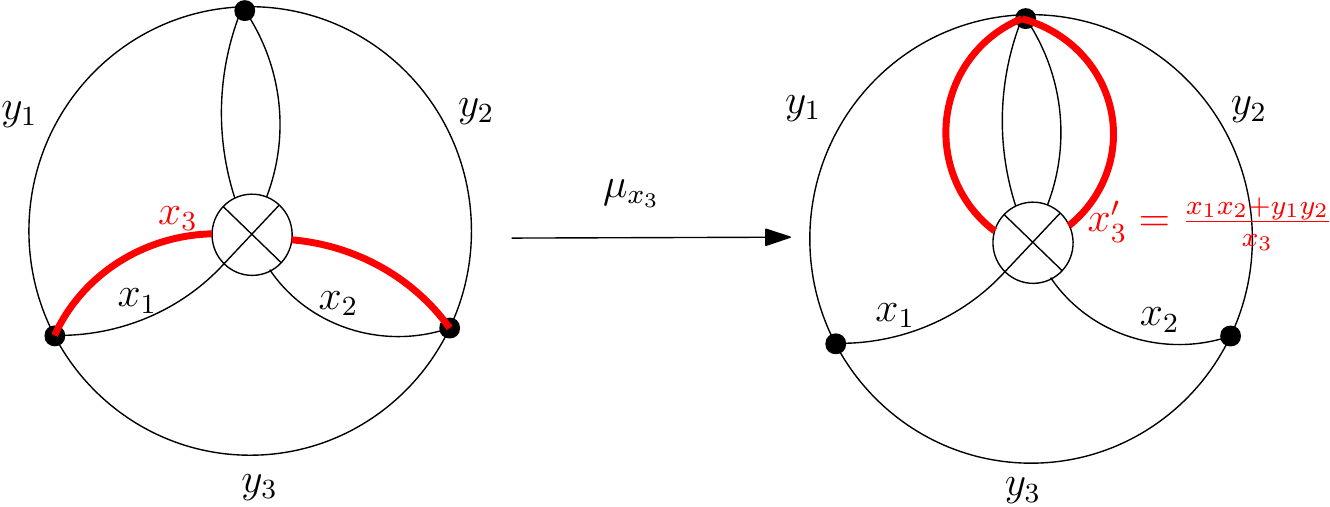}\caption{Mutation of clusters in $M_3$} \label{Fig::1mutM3}\end{figure}
\end{example}

\begin{definition}
Let $\X$ be the union of all clusters obtained by successive mutation from an initial seed $(T, \x)$. The \df{quasi-cluster algebra} is 
\[\mathscr{A}(T, \x) = \ZP\left[ \X \right]. \]
That is to say, every element of $\mathcal F$ which can be expressed as a polynomial in the elements of $\X$ with coefficients in $\ZP$.
\end{definition}

%\hugh{I think it would be good to say: that is to say, every element of $\mathcal F$ which can be expressed as a polynomial in the elements of $\X$ with coefficients in $\ZP$.}

\begin{example}
In figure \ref{Fig::mutM2}, cluster variables from the quasi-cluster algebra from a M\"obius Strip with 2 marked points ($M_2$) are calculated. In the figure, $x_1'$ and $x_2'$ are derived from the Ptolemy relation \ref{Eq::Mutation1}: $$x_1' = \frac{{x_2}^2+y_1y_2}{x_1}, x_2' = \frac{x_1'(y_1+y_2)}{x_2} = \frac{x_1({y_1}^2y_2+y_1{y_2}^2+{x_2}^2y_1+{x_2}^2y_2)}{x_2}.$$ $x_1''$ is mutated from an anti-self-folded triangle according to equation \ref{Eq::Mutation3}, and can be calculated as
$$x_1'' = \frac{x_2'}{x_1'} = \frac{{x_1}^2(y_1+y_2)}{x_2}.$$ $x_2''$ is calculated according to formula \ref{Eq::Mutation4}, which yields $$x_2'' = \frac{(y_1+y_2)^2+{x_1''}^2y_1y_2}{x_2'} = \frac{(y_1+y_2)({x_1}^4y_1y_2+{x_2}^2)}{x_1x_2(y_1y_2+{x_2}^2)}.$$

The cluster algebra consists of all polynomials in $x_1, x_2, x_1', x_2', x_1''$ and $x_2''$ with coefficients in $\Z[y_1, y_1^{-1},y_2, y_2^{-1},y_3, y_3^{-1}]$.

%\hugh{I think it would be good to end the example by saying that the cluster algebra consists of all polynomials in $x_1, x_2,...$ with coefficients in $\Z[y_1^\pm,y_2^\pm,y_3^\pm]$}

\begin{figure}[htbp]\centering\includegraphics[width=1\textwidth]{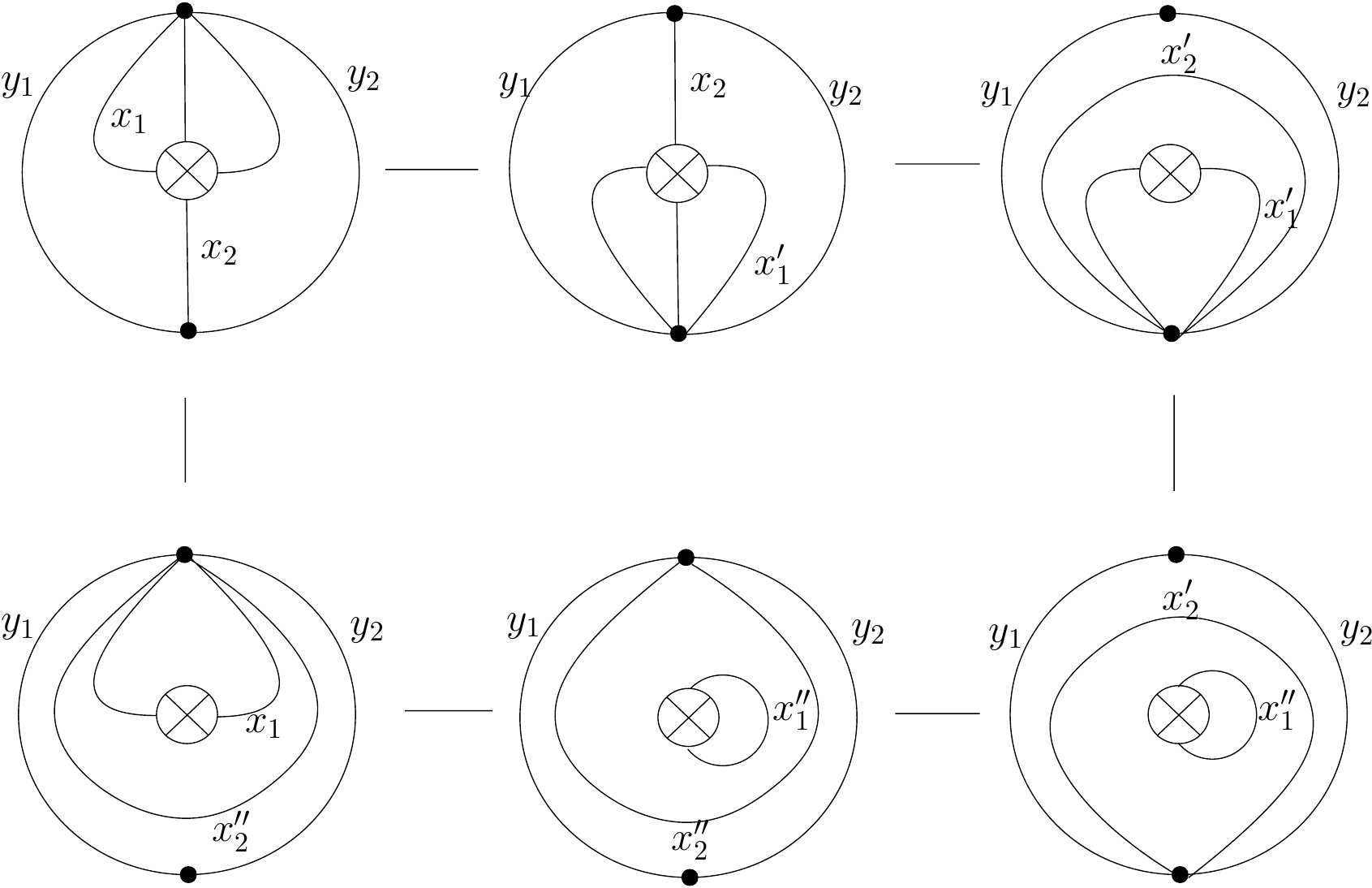}\caption{Cluster variables on $M_2$} \label{Fig::mutM2}\end{figure}

\end{example}

\begin{proposition} \label{Prp::BijClustTriang}
 There is a bijection between the set of triangulations of a marked surface and the set of seeds of the quasi-cluster algebra arising from this surface.
\end{proposition}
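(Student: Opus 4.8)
The plan is to realize the desired bijection as the projection $\pi$ that forgets the cluster, sending a seed $(T', \x')$ to its underlying triangulation $T'$. First I would check that $\pi$ is well defined and intertwines the two notions of mutation: by the definition of seed mutation one has $\mu_t(T', \x') = (\mu_t(T'), \mu_t(\x'))$, so $\pi(\mu_t(T', \x')) = \mu_t(\pi(T', \x'))$, where on the right $\mu_t$ now denotes the flip of a triangulation guaranteed by \cite[Proposition 9]{DP15}. Thus any sequence of seed mutations and the corresponding sequence of flips of triangulations track each other step by step, which is the structural fact underlying both directions of the argument.

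Surjectivity then reduces to the statement that every triangulation of $\SM$ can be reached from the fixed initial triangulation $T$ by a finite sequence of flips, i.e. that the flip graph of $\SM$ is connected. By Theorem \ref{Thm::NumberArcs}, the only surfaces under consideration are polygons and the M\"obius strips $M_n$, and for these finite cases connectivity can be verified directly (the M\"obius-strip flips are illustrated in Figure \ref{Fig::M3}). Given any triangulation $T'$, I would pick a flip sequence from $T$ to $T'$ and apply the matching sequence of mutations to the initial seed, producing a seed whose image under $\pi$ is $T'$.

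For injectivity I would show that two seeds with the same underlying triangulation are in fact equal, which amounts to proving that the cluster variable attached to an arc is determined by the arc alone and not by the mutation path taken to reach it. Here I would invoke the fact that in a quasi-cluster algebra the cluster variables are in bijection with the arcs of $\SM$, each arc $\gamma$ carrying a single well-defined variable $x_\gamma$ independent of the triangulation containing it \cite{DP15, Wil19}. Consequently the cluster of any seed with triangulation $T'$ is forced to be $\{\, x_\gamma \mid \gamma \in T' \,\}$, so two seeds projecting to the same $T'$ share the same cluster and hence coincide, giving injectivity of $\pi$.

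The main obstacle is precisely this last step: the well-definedness of the assignment $\gamma \mapsto x_\gamma$. Unlike the flip structure on triangulations, which is purely combinatorial, the independence of a cluster variable from the chosen mutation sequence is the substantive input; it rests on the Laurent phenomenon and the explicit surface expansion of cluster variables established by Dupont and Palesi. Once this is granted, the equivariance of $\pi$ together with flip-connectivity yields surjectivity and the arc-to-variable bijection yields injectivity, so $\pi$ is a bijection between the triangulations of $\SM$ and the seeds of its quasi-cluster algebra.
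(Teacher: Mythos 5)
Your proposal is correct and follows essentially the same route as the paper: the forgetful map from seeds to triangulations in one direction, and the arc-to-cluster-variable correspondence from \cite{DP15} and \cite{Wil19} to recover a unique cluster from a triangulation in the other. You are somewhat more explicit than the paper about the surjectivity step (mutation-equivariance plus connectivity of the flip graph), which the paper leaves implicit, but the substantive input --- that each arc carries a well-defined cluster variable independent of the mutation path --- is the same in both arguments.
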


\begin{proof}
 Clearly, to any seed $(T, \x)$, we can associate the triangulation $T$. 
 
 Moreover, to any arc, we can associate a unique cluster variable, \cite{DP15}. Wilson also gave a formula to express the cluster variable associated to a given arc in term of the cluster variables in any cluster, \cite{Wil19}. Therefore, to any triangulation $T$, we can associate a unique cluster $(T, \x)$.
 
 %Moreover, Wilson gave a formula to express the cluster variable associated to a given arc in term of the cluster variables in any cluster, \cite{Wil19}.
 %\hugh{You don't need Wilson's formula, though, right? Don't we already know from Dupont and Palesi that a tringulation has well-defined cluster variables associated to its edges? --- It's fine to mention Wilson's result, but logically speaking, it's good to be clear that the statement already follows from the earlier work. Unless I'm confused somehow?} Therefore, to a triangulation $T$, we can associate a unique cluster $(T, \x)$ using this formula.
\end{proof}

\begin{crl}
 There are $4^n + \binom{2n-2}{n-1}$ seeds in a quasi-cluster algebra arising from a M\"obius strip.
 Any other quasi-cluster algebra arising from a non-orientable surface has infinitely many seeds.
\end{crl}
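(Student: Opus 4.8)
The plan is to read off both assertions directly from the bijection of Proposition~\ref{Prp::BijClustTriang}, combined with the enumeration of triangulations established in Section~3 and the classification recorded in Theorem~\ref{Thm::NumberArcs}. The point is that once triangulations and seeds are identified as sets, counting seeds reduces entirely to counting triangulations, which has already been carried out.

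First I would invoke Proposition~\ref{Prp::BijClustTriang}: the set of triangulations of a marked surface $\SM$ is in bijection with the set of seeds of the quasi-cluster algebra arising from $\SM$. Because this is a bijection of sets, it transports cardinalities verbatim, whether finite or infinite. Specialising to the M\"obius strip $M_n$, the number of seeds therefore equals $T_n$, the number of triangulations of $M_n$ introduced in Proposition~\ref{Prp::Tn}. By the closed form obtained above, $T_n = 4^{n-1} + \binom{2n-2}{n-1}$, which yields the stated count for the first assertion (so the exponent should read $4^{n-1}$, in agreement with $T_n$).

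For the second assertion I would use Theorem~\ref{Thm::NumberArcs}, which classifies the marked surfaces admitting only finitely many triangulations: among non-orientable surfaces, the sole example is the M\"obius strip. Hence any non-orientable surface that is not a M\"obius strip admits infinitely many triangulations. Applying the bijection of Proposition~\ref{Prp::BijClustTriang} once more, now in its infinite-cardinality form, the associated quasi-cluster algebra has infinitely many seeds.

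I do not expect a genuine obstacle here, since every ingredient is already in place and the argument is a short assembly. The only points that warrant care are verifying that Proposition~\ref{Prp::BijClustTriang} is a bijection of sets, so that it legitimately preserves infinite cardinalities and not merely finite counts, and confirming that Theorem~\ref{Thm::NumberArcs} exhausts all non-orientable marked surfaces relevant to the statement, so that the phrase ``any other non-orientable surface'' is genuinely covered.
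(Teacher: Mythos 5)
Your argument matches the paper's own proof, which likewise derives the corollary directly from Proposition~\ref{Prp::BijClustTriang} together with the triangulation count and Theorem~\ref{Thm::NumberArcs}. You are also right to flag that the exponent in the statement should be $4^{n-1}$ rather than $4^n$, consistent with $T_n = 4^{n-1} + \binom{2n-2}{n-1}$; this appears to be a typo in the corollary as printed.
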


\begin{proof}
 It follows directly from Proposition \ref{Prp::BijClustTriang} and Theorem \ref{Thm::NumberArcs}.
\end{proof}

\section*{Acknowledgements}

We would like to thank Jon Wilson for fruitful discussions. We would also like to thank Hugh Thomas for providing editorial advice of this paper and helping us. %\hugh{It's perhaps ungracious of me to object to what I am being thanked for, but "reviewing" is not the word I would use for what I have done. You could say "providing editorial advice". Or you could thank me "for helpful comments on an earlier draft of the paper" --- that's quite a normal thing to see in acknowledgements.}
Finally, we would like to thank Canada/USA Mathcamp for giving us the opportunity to meet together and spark discussions on this topic. 

\bibliographystyle{alpha}
\bibliography{biblio}

\begin{thebibliography}{AHBHY18}

\bibitem[AHBHY18]{AHBHY18}
Nima Arkani-Hamed, Yuntao Bai, Song He, and Gongwang Yan.
\newblock Scattering forms and the positive geometry of kinematics, color and
  the worldsheet.
\newblock {\em Journal of High Energy Physics}, 2018(5), May 2018.

\bibitem[BMRT07]{BMRT07}
Aslak~B. Buan, Robert~J. Marsh, Idun Reiten, and Gordana Todorov.
\newblock Clusters and seeds in acyclic cluster algebras.
\newblock {\em Proceedings of the American Mathematical Society},
  135(10):3049--3060, 2007.

\bibitem[DP15]{DP15}
Grégoire Dupont and Frédéric Palesi.
\newblock Quasi-cluster algebras from non-orientable surfaces.
\newblock {\em Journal of Algebraic Combinatorics}, 42(2):429–472, Mar 2015.

\bibitem[FST08]{FST08}
Sergey Fomin, Michael Shapiro, and Dylan Thurston.
\newblock Cluster algebras and triangulated surfaces. part {I}: Cluster
  complexes.
\newblock {\em Acta Mathematica}, 201(1):83--146, 2008.

\bibitem[FZ02]{FZ02}
Sergey Fomin and Andrei Zelevinsky.
\newblock Cluster algebras {I}: Foundations.
\newblock {\em Journal of the American Mathematical Society}, 15(2):497--529,
  2002.

\bibitem[FZ03]{FZ03}
Sergey Fomin and Andrei Zelevinsky.
\newblock Cluster algebras {II}: Finite type classification.
\newblock {\em Inventiones mathematicae}, 154(1):63--121, 2003.

\bibitem[FZ07]{FZ07}
Sergey Fomin and Andrei Zelevinsky.
\newblock Cluster algebras {IV}: Coefficients.
\newblock {\em Compositio Mathematica}, 143(1):112--164, 2007.

\bibitem[GLS13]{GLS13}
Ch. Geiss, B.~Leclerc, and J.~Schr\"oer.
\newblock Cluster algebras in algebraic {L}ie theory.
\newblock {\em Transform. Groups}, 18(1):149--178, 2013.

\bibitem[GSV10]{GSV10}
Michael Gekhtman, Michael Shapiro, and Alek Vainshtein.
\newblock {\em Cluster algebras and {P}oisson geometry}, volume 167 of {\em
  Mathematical Surveys and Monographs}.
\newblock American Mathematical Society, Providence, RI, 2010.

\bibitem[Lam38]{Lam38}
M.~Lamé.
\newblock Extrait d’une lettre de {M}. {L}amé à {M}. {L}iouville sur cette
  question: Un polygone convexe étant donné, de combien de manières peuton
  le partager en triangles au moyen de diagonales?
\newblock {\em Journal des mathématiques pures et appliquées}, 3:505--507,
  1938.

\bibitem[Wil18]{Wil18}
Jon Wilson.
\newblock Shellability and sphericity of finite quasi-arc complexes.
\newblock {\em Discrete Comput Geom}, 59(3):680--706, 2018.

\bibitem[Wil19]{Wil19}
Jon Wilson.
\newblock Positivity for quasi-cluster algebras, 2019.
\newblock arXiv:1912.12789.

\end{thebibliography}

\end{document}